\newdimen\bibspace
\renewenvironment{thebibliography}[1]{%
 \section*{\refname 
       \@mkboth{\MakeUppercase\refname}{\MakeUppercase\refname}}%
     \list{\@biblabel{\@arabic\c@enumiv}}%
          {\settowidth\labelwidth{\@biblabel{#1}}%
           \leftmargin\labelwidth
           \advance\leftmargin\labelsep
           \itemsep\bibspace
           \parsep\z@skip     %
           \@openbib@code
           \usecounter{enumiv}%
           \let\p@enumiv\@empty
           \renewcommand\theenumiv{\@arabic\c@enumiv}}%
     \sloppy\clubpenalty4000\widowpenalty4000%
     \sfcode`\.\@m}
    {\def\@noitemerr
      {\@latex@warning{Empty `thebibliography' environment}}%
     \endlist}
\newtheorem{thm}{Theorem}[section]
\newtheorem{lem}[thm]{Lemma}
\newtheorem{prop}[thm]{Proposition}
\def\Xint#1{\mathchoice
  {\XXint\displaystyle\textstyle{#1}}%
  {\XXint\textstyle\scriptstyle{#1}}%
  {\XXint\scriptstyle\scriptscriptstyle{#1}}%
  {\XXint\scriptscriptstyle\scriptscriptstyle{#1}}%
  \!\int}
\def\XXint#1#2#3{{\setbox0=\hbox{$#1{#2#3}{\int}$}
  \vcenter{\hbox{$#2#3$}}\kern-.5\wd0}}
\def\dashint{\Xint-}
\newcommand{\al}{\alpha}                \newcommand{\lda}{\lambda}
\newcommand{\om}{\Omega}                \newcommand{\pa}{\partial}
\newcommand{\va}{\varepsilon}           \newcommand{\ud}{\mathrm{d}}
\newcommand{\be}{\begin{equation}}      \newcommand{\ee}{\end{equation}}
                 \newcommand{\X}{\overline{X}}
              \newcommand{\B}{\mathcal{B}}
\newcommand{\R}{\mathbb{R}}
\newcommand{\dlim}{\displaystyle\lim}
\begin{document}

\title{\textbf{Local analysis of solutions of fractional semi-linear elliptic equations with isolated singularities}
\bigskip}

\author{\medskip  Luis Caffarelli, \ \ Tianling Jin, \  \  Yannick Sire, \ \
Jingang Xiong}

\date{\today}

\maketitle

\begin{abstract} In this paper, we study the local behaviors of nonnegative local solutions of fractional order semi-linear equations $(-\Delta )^\sigma u=u^{\frac{n+2\sigma}{n-2\sigma}}$ with an isolated singularity, where $\sigma\in (0,1)$. We prove that all the solutions are asymptotically radially symmetric. When $\sigma =1$, these have been proved in \cite{CGS} by Caffarelli, Gidas and Spruck .

\end{abstract}

\section{Introduction}

In this paper, we study the local behaviors of nonnegative solutions of 
\be\label{eq:maineq}
(-\Delta)^\sigma u=u^{\frac{n+2\sigma}{n-2\sigma}} \quad \mbox{in } B_1 \setminus \{0\}
\ee
with an isolated singularity at the origin, where the punctured unit ball $B_1\setminus \{0\}\subset\R^n$ with $n\ge 2$, $\sigma\in (0,1)$, and $(-\Delta)^\sigma $ is the fractional Laplacian. This semi-linear equation involving the fractional Laplacian with the critical Sobolev exponent arises in contexts such as the Euler-Lagrangian equations of Sobolev inequalities \cite{Lie83, CLO, Li04}, a fractional Yamabe problem \cite{GQ, GMS, JX1}, a fractional Nirenberg problem \cite{JLX, JLX2} and so on. A feature of \eqref{eq:maineq} is that it is conformally invariant, and one may refer to \cite{GZ, CG} for its connections to conformal geometry. 

Singular solutions of fractional order conformal Laplacian equations was studied in \cite{GMS}, where the authors investigated the singular sets of such solutions and characterized the connection between the dimension of the singular sets and the order of the equations. Solutions of \eqref{eq:maineq} with an isolated singularity are the simplest examples of those singular solutions. We are interested in the local behaviors of solutions of \eqref{eq:maineq} near the singularity such as their precise blow up rates and asymptotically radial symmetry property. In the classical case $\sigma=1$, this was proved in the pioneer paper \cite{CGS} of Caffarelli, Gidas and Spruck.

We analyze \eqref{eq:maineq} via the extension formulations for fractional Laplacians established by Caffarelli and Silvestre \cite{CaS}. This is a commonly used tool nowadays, through which instead of \eqref{eq:maineq} we can study a degenerate elliptic equation with a Neumann boundary condition in one dimension higher. We use capital letters, such as $X=(x,t)\in\R^n\times\R_+$, to denote points in $\R^{n+1}$. We also denote $\B_R$ as the ball in $\R^{n+1}$ with radius $R$ and center at the origin, $\B^+_R$ as the upper half ball $\B_R\cap \R^{n+1}_+$, and $\pa' \B_R$ as the flat part of $\pa \B_R$ which is the ball $B_R$ in $\R^{n}$. Then the substitution equation we study is
\be\label{eq:ex0}
\begin{cases}
\begin{aligned}
\mathrm{div}(t^{1-2\sigma} \nabla U)&=0 & \quad &\mbox{in }\B_2^+,\\
\frac{\pa U}{\pa \nu^\sigma} (x,0)&= U^{\frac{n+2\sigma}{n-2\sigma}}(x,0) &\quad& \mbox{on }\pa' \B_2\setminus\{0\},
\end{aligned}
\end{cases}
\ee
where $\frac{\pa U}{\pa \nu^\sigma}(x,0)= -\lim_{t\to 0^+} t^{1-2\sigma} \pa_t U(x,t)$. By the extension formulation in \cite{CaS}, we only need to analyze the behaviors of the traces $$u(x):=U(x,0)$$ of the nonnegative solutions $U(x,t)$ of \eqref{eq:ex0} near the origin, from which the behaviors of solutions of \eqref{eq:maineq} follow. 

We say that $U$ is a nonnegative solution of \eqref{eq:ex0} if $U$ is in the weighted Sobolev space $W^{1,2}(t^{1-2\sigma}, \B_2^+\setminus \overline \B_\va^+)$ for all $\va>0$, $U\ge 0$, and it satisfies \eqref{eq:ex0} in the sense of distribution away from $0$ (see \cite{JLX} for more details on this definition). Then it follows from the regularity result  in \cite{JLX} that $U(x,t)$ is locally H\"older continuous in $\overline \B_1\setminus\{0\}$. We say that the origin $0$ is a non-removable singularity of solution $U$ of \eqref{eq:ex0} if $U(x,0)$ can not be extended as a continuous function near the origin. 
 Our first result shows its precise blow up rate near non-removable isolated singularities.
\begin{thm}\label{thm:a} 
Suppose that $U$ is a nonnegative solution of \eqref{eq:ex0}. Then either $u$ can be extended as a continuous function near $0$, or there exist two positive constants $c_1$ and $c_2$ such that
\be\label{eq:low and up bound}
c_1|x|^{-\frac{n-2\sigma}{2}}\le u(x)\le c_2|x|^{-\frac{n-2\sigma}{2}}.
\ee
\end{thm}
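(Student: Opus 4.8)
The natural strategy follows the classical Caffarelli–Gidas–Spruck scheme, adapted to the extension problem. First I would perform the standard conformal/cylindrical change of variables: set $r = |x|$, write $x = r\theta$ with $\theta\in\Sn$, and in the extension variables pass to coordinates $(s,\theta,\text{angular variable for }t)$ via $r = e^{-s}$, using the Emden–Fowler type substitution $u(x) = |x|^{-\frac{n-2\sigma}{2}} v(s,\theta)$ (and correspondingly $U(x,t) = |X|^{-\frac{n-2\sigma}{2}} V(\dots)$ on a half-cylinder). In these variables \eqref{eq:ex0} becomes an autonomous degenerate elliptic equation on the half-cylinder $\mathbb{R}\times\Sn\times\R_+$ with a nonlinear Neumann condition, and the claimed two-sided bound \eqref{eq:low and up bound} is exactly the statement that $v(s,\theta)$ stays bounded between two positive constants as $s\to+\infty$ (i.e.\ as $x\to 0$), assuming the singularity is non-removable.

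Next I would establish that the singularity is either removable or forces genuine blow-up — i.e.\ a dichotomy — via a Harnack inequality and a capacity/removability argument. The key tools are: (i) the boundary Harnack inequality for the degenerate operator $\mathrm{div}(t^{1-2\sigma}\nabla\cdot)$ with Neumann data, which should be available from the regularity theory in \cite{JLX} and \cite{CaS}; (ii) a Bôcher-type / removable singularity lemma stating that if $u$ is bounded near $0$ then it extends continuously (using that $|x|^{-(n-2\sigma)}$ is the fundamental-type barrier and comparing). So one assumes from now on that $u$ is unbounded near $0$. The upper bound is the heart of the matter: I would run a blow-up (rescaling) argument. Suppose $\sup_{\pa B_r} u$ does not decay like $r^{-\frac{n-2\sigma}{2}}$; rescale $U_r(X) = r^{\frac{n-2\sigma}{2}} U(rX)$, which again solves \eqref{eq:ex0} by scaling invariance of the critical exponent, and analyze the limit. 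Using the Harnack inequality on dyadic annuli one controls oscillation, and one derives a monotonicity/ODE for the spherical average $\bar v(s) = \fint_{\Sn} v(s,\theta)\,d\theta$; the subcriticality of the "energy" or a Pohozaev-type identity rules out the limit being a positive constant times an exponentially growing mode, and rules out decay to $0$ (which would give removability). The standard outcome is that $\bar v$ is bounded above and below, and then Harnack upgrades this to the pointwise bound on $v$, hence on $u$.

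For the lower bound specifically, once the singularity is non-removable the cleanest route is a maximum-principle comparison: the function $|x|^{-\frac{n-2\sigma}{2}}$ is a (sub/super)solution of the linear part in the appropriate annulus, and since $(-\Delta)^\sigma u \ge 0$ forces $u$ to be bounded below by a multiple of the fundamental solution's exponent on shrinking annuli — combined with the fact that $u$ cannot be too small anywhere on $\pa B_r$ by Harnack — one gets $u(x) \ge c_1 |x|^{-\frac{n-2\sigma}{2}}$. Alternatively both bounds come out together from the cylindrical analysis: $\bar v$ satisfies a differential inequality whose bounded-away-from-zero positive solutions are trapped in an interval determined by the two constant solutions $0$ and the Fowler constant.

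The main obstacle I anticipate is the \textbf{upper bound} and, within it, making the blow-up analysis rigorous in the degenerate-weighted setting: one needs uniform (in the rescaling parameter) Harnack and Hölder estimates up to the boundary $\{t=0\}$ for solutions of the $A_2$-weighted equation with the critical nonlinear Neumann term, plus a Liouville-type classification of global nonnegative solutions of \eqref{eq:ex0} on $\R^{n+1}_+\setminus\{0\}$ (or on the cylinder) to identify the blow-up limit — the latter presumably relying on the moving-plane/moving-sphere method in the extension, which is delicate near the singular set. The removability half of the dichotomy (controlling $u$ when it is known to be bounded) and the lower bound are comparatively routine once the Harnack inequality and the basic barrier $|x|^{-(n-2\sigma)}$ are in hand.
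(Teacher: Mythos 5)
There is a genuine gap, and it sits at the center of your plan: the Emden--Fowler cylindrical transformation and the resulting ``monotonicity/ODE for the spherical average $\bar v(s)$'' are not available when $\sigma\in(0,1)$. After the substitution $u=|x|^{-\frac{n-2\sigma}{2}}v$, the extension problem becomes an autonomous but still genuinely $(n+1)$-dimensional degenerate \emph{PDE} on a half-cylinder; the spherical average of $v$ does not satisfy a closed second-order ODE, because the operator acting on $u$ is nonlocal. The authors state explicitly in the introduction that this ODE analysis is precisely the missing ingredient relative to the $\sigma=1$ case of Caffarelli--Gidas--Spruck (it is why they do not prove the refined asymptotics toward a global Fowler-type solution). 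Consequently both your route to the upper bound and your ``alternative'' derivation of the lower bound (trapping $\bar v$ between the two constant solutions) collapse. What the paper actually does for the upper bound is a contradiction argument with the standard point selection $v_j(x)=(\frac{|x_j|}{2}-|x-x_j|)^{\frac{n-2\sigma}{2}}u(x)$ and rescaling: the rescaled solutions $W_j$ converge to a bubble by the Liouville theorem (this much you have), but the contradiction comes from running the moving sphere method \emph{on each $W_j$} over its huge domain $\om_j$ --- the point being that $W_j\geq 1/(Cu(\bar x_j))$ on $\pa''\om_j$ while $(W_j)_{X_0,\lda}$ is of order $|\xi|^{2\sigma-n}$ there, so the sphere centred at any fixed $X_0$ can be moved out to any fixed radius $\lda_0$ --- which in the limit yields $w_{x,\lda}\le w$ for all $x$ and $\lda$, hence $w\equiv\mathrm{const}$ by the Li--Zhang calculus lemma, contradicting the bubble. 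Your plan supplies no substitute for this mechanism once the ODE is gone.

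The lower bound is also not ``comparatively routine''. Superharmonicity plus the maximum principle only give $\liminf_{X\to0}U>0$, a \emph{constant} lower bound, not the rate $|x|^{-\frac{n-2\sigma}{2}}$; and $|x|^{-\frac{n-2\sigma}{2}}$ is not a subsolution that you can slide under $u$ by comparison (if it were, no singularity would ever be removable). The dichotomy in the theorem is obtained from the Pohozaev invariant $P(U,R)$, which is independent of $R$: if $\liminf|x|^{\frac{n-2\sigma}{2}}u=0$ while $\limsup>0$, one rescales along radii $r_i$ that are local minima of $r^{\frac{n-2\sigma}{2}}\bar u(r)$ with $r_i^{\frac{n-2\sigma}{2}}\bar u(r_i)\to0$; the limit is $a|X|^{2\sigma-n}+b$ with $a=b=1/2$ by a B\^ocher-type theorem, and evaluating $P$ along this sequence gives simultaneously $P(U)=0$ and $P(U)=-\frac{(n-2\sigma)^2}{8}\int_{\pa''\B_1}t^{1-2\sigma}\ne0$, a contradiction. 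This forces either $\liminf|x|^{\frac{n-2\sigma}{2}}u>0$ (the lower bound) or $\lim|x|^{\frac{n-2\sigma}{2}}u=0$; in the latter case removability is proved not by a B\^ocher lemma for bounded $u$ (boundedness at the sharp rate is all one knows) but by the explicit barrier $C\Phi_\al+\va\Phi_\beta$ with $\Phi_\mu=|X|^{-\mu}-\delta t^{2\sigma}|X|^{-(\mu+2\sigma)}$ and $\al<\frac{n-2\sigma}{2}$, which yields $U\le C|X|^{-\al}$, hence $U\in W^{1,2}(t^{1-2\sigma},\B_\tau^+)$, so the equation holds across the origin and the regularity theory applies. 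Your proposal contains neither the Pohozaev step nor this barrier, and these are exactly the points where the argument is won.
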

We remark that once \eqref{eq:low and up bound} holds, the Harnack inequality \eqref{eq:spherical harnack} implies that
\[
C_1|X|^{-\frac{n-2\sigma}{2}}\le U(X)\le C_2|X|^{-\frac{n-2\sigma}{2}}
\]
holds as well, for some positive constants $C_1$ and $C_2$.

We are also able to show that the trace $u$ of every solution $U$ of \eqref{eq:ex0} is asymptotically radially symmetric.
\begin{thm}\label{thm:b} If $U$ is a nonnegative solution of \eqref{eq:ex0}, then
\[
u(x)=\bar u(|x|)(1+O(|x|))\quad\mbox{as }x\to 0,
\]
where $\bar u(|x|)=\dashint_{\mathbb{S}^n}u(|x|\theta)\ud \theta$ is the spherical average of $u$. 
\end{thm}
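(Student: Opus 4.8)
The plan is to follow the method of moving planes, adapted to the extension problem, exactly in the spirit of the Caffarelli–Gidas–Spruck argument for $\sigma=1$. The conformal invariance of the equation is the organizing principle: after a Kelvin transform, asymptotic radial symmetry near $0$ becomes asymptotic radial symmetry near $\infty$ of a solution on an exterior domain with the correct decay, which is controlled by Theorem~\ref{thm:a}.

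First I would dispose of the removable-singularity case trivially (then $u$ extends continuously and is locally smooth, nothing to prove), so assume the two-sided bound \eqref{eq:low and up bound} holds, together with its extension version $C_1|X|^{-(n-2\sigma)/2}\le U(X)\le C_2|X|^{-(n-2\sigma)/2}$ and, by standard scaling plus boundary Harnack, matching gradient estimates $|\nabla U(X)|\le C|X|^{-(n-2\sigma)/2-1}$. Next, perform the Kelvin transform $\widetilde U(X)=|X|^{-(n-2\sigma)}U(X/|X|^2)$; because \eqref{eq:ex0} is conformally invariant, $\widetilde U$ solves the same degenerate equation with the same Neumann nonlinearity on the exterior of a ball, its trace $\tilde u$ satisfies $C_1\le \tilde u(x)|x|^{-(n-2\sigma)/2}$-type bounds near infinity, i.e. $\tilde u(x)\sim |x|^{-(n-2\sigma)/2}$ for large $|x|$, and in particular $\tilde u$ decays. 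The goal becomes: $\tilde u(x)=\bar{\tilde u}(|x|)(1+O(1/|x|))$ as $|x|\to\infty$.

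The core is the moving-plane scheme. For a direction $e$ and a plane $\{x\cdot e=\lambda\}$ in $\R^n$ (extended to $\{X\cdot e=\lambda\}$ in $\R^{n+1}_+$, which is legitimate since the weight $t^{1-2\sigma}$ and the equation are invariant under reflections fixing $t$), set $\widetilde U_\lambda(X)=\widetilde U(X^\lambda)$ where $X^\lambda$ is the reflection, and study $w_\lambda=\widetilde U_\lambda-\widetilde U$. One shows $w_\lambda\ge 0$ in the reflected cap for all large $\lambda$: the decay of $\widetilde U$ from the Kelvin transform forces $w_\lambda>0$ near infinity, and a maximum principle for the degenerate operator $\mathrm{div}(t^{1-2\sigma}\nabla\cdot)$ together with the boundary condition — rewritten via the mean value theorem as $\frac{\pa w_\lambda}{\pa\nu^\sigma}=c_\lambda(x)w_\lambda$ with $c_\lambda\ge 0$ — propagates positivity down to the optimal $\lambda_e$. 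The usual CGS dichotomy then yields: \emph{either} the plane can be moved to a fixed finite position independent of $e$ (forcing exact radial symmetry modulo a bounded error, which is even stronger than claimed), \emph{or} it moves all the way and one extracts, for each direction, a definite comparison $\widetilde U(X)\le \widetilde U(X^{\lambda_e})$ on a half-space. Intersecting over all directions $e\in \mathbb S^{n-1}$ and translating back via the Kelvin transform converts this into the oscillation estimate: $\max_{|x|=r}\tilde u-\min_{|x|=r}\tilde u \le C r^{-1}\max_{|x|=r}\tilde u$ for large $r$, which is precisely $\tilde u(x)=\bar{\tilde u}(|x|)(1+O(1/|x|))$. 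Undoing the Kelvin transform gives $u(x)=\bar u(|x|)(1+O(|x|))$ as $x\to0$.

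**The main obstacle** I anticipate is the start of the moving-plane procedure at infinity: in the fractional setting the nonlocal-made-local operator still only sees a half-ball, and one must carefully check that the Kelvin-transformed solution has the precise asymptotics (both $\widetilde U$ and $\nabla\widetilde U$, up to the boundary $t=0$) needed to guarantee $w_\lambda>0$ for large $\lambda$ and to run the sliding argument — this requires upgrading \eqref{eq:low and up bound} to a genuine asymptotic expansion, or at least to sharp two-sided bounds with gradient control, via a Harnack-type argument (the estimate \eqref{eq:spherical harnack} referenced in the remark) applied on dyadic annuli. The second delicate point is the narrow-domain / maximum-principle input: one needs a maximum principle for $\mathrm{div}(t^{1-2\sigma}\nabla\cdot)$ on unbounded caps with the Neumann-type condition $\frac{\pa w_\lambda}{\pa\nu^\sigma}=c_\lambda w_\lambda$, handling the degenerate weight and the sign of $c_\lambda$ (which follows from $\frac{n+2\sigma}{n-2\sigma}>1$ and positivity of $\tilde u$), plus a Hopf-type lemma at the reflection plane to start the plane moving. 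Both are by-now-standard in the Caffarelli–Silvestre framework, but assembling them with the exterior-domain geometry is where the real work lies.
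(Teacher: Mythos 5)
Your overall strategy (conformal inversion plus a reflection argument, powered by the two--sided bound of Theorem \ref{thm:a}) is the right family of ideas, but there is a genuine gap at the decisive step: \emph{starting} the moving plane at infinity for the Kelvin transform $\widetilde U$. You assert that ``the decay of $\widetilde U$ from the Kelvin transform forces $w_\lambda>0$ near infinity.'' It does not. Since the singularity is non-removable, \eqref{eq:low and up bound} gives only $C_1|X|^{-\frac{n-2\sigma}{2}}\le \widetilde U(X)\le C_2|X|^{-\frac{n-2\sigma}{2}}$ with $C_1<C_2$; the reflected point $X^\lambda$ satisfies $|X^\lambda|<|X|$ but $|X|/|X^\lambda|\to 1$ as $|X|\to\infty$ in the cap, so the lower bound at $X^\lambda$ never dominates the upper bound at $X$, and no sign for $w_\lambda$ near infinity follows. (This is precisely why, already for $\sigma=1$, one cannot run the moving plane from infinity on the inversion of a \emph{singular} solution: the decay $|X|^{-\frac{n-2\sigma}{2}}$ is far from the harmonic rate $|X|^{-(n-2\sigma)}$ needed for that.) The paper sidesteps this by moving \emph{spheres} centered at points $X=(x,0)$ with $|x|$ small and radii $\lambda<|x|$: the start for small $\lambda$ is obtained from the narrow-domain technique plus an explicit barrier $\bigl(\lambda_2/|\xi-X|\bigr)^{n-2\sigma}\inf_{\partial''\B_{\lambda_2}(X)}U$, the behavior at the singularity is controlled by Proposition \ref{prop:liminf}, and the outer boundary $\partial''\B_1$ by the upper bound of Theorem \ref{thm:a}. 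Only after proving $\bar\lambda(x)=|x|$ does one invert; the limiting spheres through the origin become exactly your planes $\{y\cdot e=\mu\}$, $\mu>M$, yielding \eqref{eq:small stop1} and then the plane-reflection inequalities for $v$. Your planes-at-infinity are thus the \emph{conclusion} of the paper's argument, not an available starting point.

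A secondary, fixable omission: even granted the reflection inequalities, intersecting over directions only compares spheres of different radii (as in \eqref{eq:compare1}, radii $\tfrac{r}{1+cMr}$ versus $\tfrac{r}{1-cMr}$), not points on the \emph{same} sphere. To close the loop one needs a gradient bound $|\nabla\log u(x)|\le C/|x|$ along radial segments of length $O(r^2)$; the paper derives the sharp version \eqref{eq:ln est} from the moving-sphere inequality via the calculus lemma of Li--Li, though the cruder bound you get from rescaling and the two-sided estimate would also suffice. Make this step explicit rather than folding it into ``intersecting over all directions.''
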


When $\sigma =1$, Theorem \ref{thm:a} and Theorem \ref{thm:b} were proved in \cite{CGS} by Caffarelli, Gidas and Spruck. We may also see \cite{KMPS} for this classical case, and \cite{Li06, HLT} for some conformally invariant fully nonlinear equations with isolated singularities. A similar upper bound in \eqref{eq:low and up bound} was obtained in \cite{GMS} under additional assumptions that the equations are globally satisfied on the \emph{whole} space and the conformal metric is complete. Also, 
the upper bound in \eqref{eq:low and up bound} should hold for solutions of equation \eqref{eq:ex0} with other singular sets which are small in some capacity sense instead of one single point, see, e.g., \cite{ChLin} for the case of $\sigma=1$. 

In the global case that the origin is a non-removable isolated singularity, then the solutions are cylindrically symmetric with respect to the origin.
\begin{thm}\label{thm:gs}
Let $U$ be a nonnegative solution of
\be\label{eq:ex01}
\begin{cases}
\begin{aligned}
\mathrm{div}(t^{1-2\sigma} \nabla U)&=0 & \quad &\mbox{in }\R^{n+1}_+,\\
\frac{\pa U}{\pa \nu^\sigma} (x,0)&= U^{\frac{n+2\sigma}{n-2\sigma}}(x,0) &\quad &\mbox{on }\R^n\setminus\{0\}.
\end{aligned}
\end{cases}
\ee
Suppose the origin $0$ is not removable. Then $U(x,t)=U(|x|, t)$ and $\pa_r U(r,t)< 0$ for all $0<r<\infty$.
\end{thm}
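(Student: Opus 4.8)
The plan is to run the method of moving planes of \cite{CGS}, carried out — as in the proofs of Theorems~\ref{thm:a} and \ref{thm:b} — on the extension problem \eqref{eq:ex01}, so that the nonlocal operator is replaced by $\mathrm{div}(t^{1-2\sigma}\nabla\cdot)$ with its Neumann-type condition. The necessary global inputs come first. Since \eqref{eq:ex01} is conformally invariant, the Kelvin transform $U^{*}(X):=|X|^{-(n-2\sigma)}U(X/|X|^{2})$ is again a nonnegative solution of \eqref{eq:ex01}, and $0$ must be non-removable for $U^{*}$ as well, since otherwise $U$ would extend continuously across $0$, contradicting the hypothesis. Applying Theorem~\ref{thm:a} together with the Harnack inequality to $U$ and to $U^{*}$ gives $0<c_{1}<c_{2}$ with
\[
c_{1}|X|^{-\frac{n-2\sigma}{2}}\le U(X)\le c_{2}|X|^{-\frac{n-2\sigma}{2}}\qquad\text{for all }X\in\overline{\R^{n+1}_{+}}\setminus\{0\},
\]
and applying Theorem~\ref{thm:b} to $U$ and to $U^{*}$ shows that $u$ is asymptotically radially symmetric both as $x\to0$ and as $x\to\infty$.

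Next I would set up the moving planes orthogonal to the directions in $\R^{n}$. Fix a unit vector $e\in\R^{n}$, identified with $(e,0)\in\R^{n+1}$; for $\lambda\in\R$ write $T_{\lambda}=\{X:x\cdot e=\lambda\}$, let $X^{\lambda}$ be the reflection of $X$ across $T_{\lambda}$, let $\Sigma_{\lambda}$ be the connected component of $\R^{n+1}_{+}\setminus T_{\lambda}$ \emph{not} containing the origin, and put $U_{\lambda}(X)=U(X^{\lambda})$ and $w_{\lambda}=U_{\lambda}-U$ on $\overline{\Sigma_{\lambda}}$. By \eqref{eq:ex01} and the mean value theorem, $w_{\lambda}$ solves $\mathrm{div}(t^{1-2\sigma}\nabla w_{\lambda})=0$ in $\Sigma_{\lambda}$ with $\frac{\pa w_{\lambda}}{\pa\nu^{\sigma}}=c_{\lambda}(x)\,w_{\lambda}$ on the flat part, where $c_{\lambda}\ge0$ and $c_{\lambda}(x)=O(|x|^{-2\sigma})$ as $|x|\to\infty$ by the upper bound above. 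The key features are: $w_{\lambda}=0$ on $T_{\lambda}$; $|X^{\lambda}|\le|X|$ throughout $\Sigma_{\lambda}$, so $w_{\lambda}\to0$ at infinity there; and — the feature that makes the singular case workable — the singular point $P_{\lambda}$ of $U_{\lambda}$ (the reflection of $0$) lies \emph{inside} $\Sigma_{\lambda}$, so $w_{\lambda}\to+\infty$ near $P_{\lambda}$, which is the favorable sign for proving $w_{\lambda}\ge0$.

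The heart of the matter is then to show $w_{\lambda}\ge0$ in $\Sigma_{\lambda}$ for every $\lambda$ reached by the sliding: a negative infimum of $w_{\lambda}$ cannot be approached on $T_{\lambda}$, at infinity, or near $P_{\lambda}$, hence would be attained at an interior or flat-boundary point, where the strong maximum principle and the sign of $c_{\lambda}$ in the Neumann condition force $w_{\lambda}\equiv0$, a contradiction — the control at infinity coming from a barrier against the Hardy-type potential $c_{\lambda}=O(|x|^{-2\sigma})$ (or from a trace--Sobolev energy estimate for $w_{\lambda}^{-}$), and the initiation of the sliding coming from the asymptotic radial symmetry of the first step. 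Letting $\lambda\to0$ gives $w_{0}\ge0$ in $\Sigma_{0}$; the same argument with $e$ replaced by $-e$ gives the reverse inequality, so $U$ is symmetric across $\{x\cdot e=0\}$, and since $e$ is arbitrary, $U(x,t)=U(|x|,t)$. Finally, $\pa_{r}U<0$ for $0<r<\infty$ follows from the Hopf lemma applied on the planes $T_{\lambda}$, $\lambda>0$, where now $w_{\lambda}>0$ strictly in $\Sigma_{\lambda}$ since $w_{\lambda}\not\equiv0$.

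The step I expect to be the main obstacle is precisely this borderline decay: $U$ blows up at $0$, which lies on every candidate symmetry plane, and decays only like $|X|^{-(n-2\sigma)/2}$ at infinity, so $c_{\lambda}$ is not globally small in $L^{n/(2\sigma)}$ and one cannot simply start the moving-plane process ``from infinity'' as in the finite-energy case. Making the initiation rigorous and controlling $w_{\lambda}$ near $P_{\lambda}$ and near infinity — relying on the a priori asymptotic radial symmetry of Theorem~\ref{thm:b} at both ends and on sharp barriers — is where the real work lies.
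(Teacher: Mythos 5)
Your proposal runs the method of moving \emph{planes} on \eqref{eq:ex01}; the paper instead runs the method of moving \emph{spheres}: for each $x\neq 0$ it shows that the Kelvin transforms $U_{X,\lda}$ satisfy $U_{X,\lda}\le U$ for all $0<\lda<|x|$ (starting with small $\lda$ via the narrow-domain energy estimate, then pushing $\bar\lda(x)$ up to $|x|$ using that non-removability of $0$ forces $\liminf_{\xi\to 0}(U-U_{X,\bar\lda})>0$ by Proposition \ref{prop:liminf}), and then recovers every plane reflection inequality by taking $x=Re$, $\lda=R-a$ and letting $R\to\infty$. This is not merely a stylistic difference: the moving-sphere scheme never has to be ``initiated from infinity,'' which is exactly the step you correctly identify as the obstruction in your scheme and do not resolve. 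The decay $U=O(|X|^{-(n-2\sigma)/2})$ makes the linearized coefficient $c_\lda=O(|x|^{-2\sigma})$, which is borderline \emph{not} in $L^{n/(2\sigma)}$ near infinity, so neither a barrier nor the trace--Sobolev estimate for $w_\lda^-$ closes without new input; and near the reflected singularity $P_\lda$ the coefficient is large. Your proposal flags both issues as ``where the real work lies'' but supplies no argument, so as written it is a plan rather than a proof.

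There is also one step that is stated incorrectly rather than merely left open: you claim that at a negative minimum of $w_\lda$ on the flat boundary ``the strong maximum principle and the sign of $c_\lda$'' force $w_\lda\equiv 0$. The sign is the wrong way around. Since $p=\frac{n+2\sigma}{n-2\sigma}>1$, the mean value theorem gives $c_\lda\ge 0$, and at a negative boundary minimum the Hopf lemma yields $\frac{\pa w_\lda}{\pa\nu^\sigma}\le 0$ while $c_\lda w_\lda\le 0$ as well, so there is no contradiction; $c_\lda\ge 0$ is precisely the \emph{unfavorable} sign, which is why the paper (and any correct version of your argument) must instead use smallness of $\bigl(\int c_\lda^{\,n/(2\sigma)}\bigr)^{2\sigma/n}$ over the region where $w_\lda<0$, i.e.\ the narrow-domain technique combined with the a priori positivity of $w_\lda$ on the complementary region. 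Two smaller points: Theorem \ref{thm:b} is a local statement near the singularity and is not actually needed (nor is it clear how it would initiate your sliding); and for $\pa_r U<0$ the paper differentiates the equation and applies the Harnack inequality of Proposition \ref{prop:harnack} to $U_r$, which avoids the corner-type Hopf lemma at $T_\lda\cap\{t=0\}$ that your last step would require.
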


An example solution of \eqref{eq:ex01} is $u(x)=|x|^{-\frac{n-2\sigma}{2}}$ and $U(x,t)$ is the Poisson integral of $u(x)$. If solutions of \eqref{eq:ex01} can be extended as continuous functions near the origin, then one can show the second equation in \eqref{eq:ex01} holds in $\R^n$ and all the solutions have been classified in Theorem 1.5 of \cite{JLX}.

Similar arguments can also be applied to obtain a Harnack type inequality, which is due to Schoen \cite{Schoen} when $\sigma =1$.
\begin{thm}\label{thm:c} 
Suppose that $U$ is a nonnegative solution of
\be\label{eq:nonsingu}
\begin{cases}
\begin{aligned}
\mathrm{div}(t^{1-2\sigma} \nabla U)&=0 & \quad& \mbox{in }\B_{3R}^+,\\
\frac{\pa U}{\pa \nu^\sigma} (x,0)&= U^{\frac{n+2\sigma}{n-2\sigma}}(x,0) &\quad& \mbox{on }\pa' \B_{3R},
\end{aligned}
\end{cases}
\ee
for some $R>0$. Then
\be \label{eq:harnack}
\max_{B_R} u\min_{B_{2R}} u\leq CR^{2\sigma-n},
\ee
where $C$ depends only on $n$ and $\sigma$. 
\end{thm}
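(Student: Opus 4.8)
I would prove Theorem~\ref{thm:c} by contradiction, using the blow-up scheme of Schoen \cite{Schoen} for the case $\sigma=1$, carried out for the degenerate operator $\mathrm{div}(t^{1-2\sigma}\nabla\,\cdot\,)$ with the help of the regularity theory and the classification theorem of \cite{JLX}. By the scaling $U(X)\mapsto R^{(n-2\sigma)/2}U(RX)$, which preserves \eqref{eq:nonsingu}, it suffices to treat $R=1$. So suppose there are nonnegative solutions $U_j$ of \eqref{eq:nonsingu} with $R=1$ and $\max_{B_1}u_j\cdot\min_{B_2}u_j\to\infty$. Since $B_1\subset B_2$, a subsequence along which $\max_{B_1}u_j$ stays bounded would force $\min_{B_2}u_j\le\max_{B_1}u_j$ bounded, hence a bounded product; therefore $\max_{B_1}u_j\to\infty$. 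Fix $\rho\in(1,2)$ and let $x_j\in B_\rho$ attain $\mathcal{M}_j:=\max_{\overline{B_\rho}}(\rho-|x|)^{(n-2\sigma)/2}u_j(x)$. Then $\mathcal{M}_j\ge(\rho-1)^{(n-2\sigma)/2}\max_{B_1}u_j\to\infty$; writing $d_j:=\rho-|x_j|$, $X_j:=(x_j,0)$ and $L_j:=u_j(x_j)=\mathcal{M}_j d_j^{-(n-2\sigma)/2}$, we have $L_j\to\infty$, and maximality of $\mathcal{M}_j$ gives the two bounds $\max_{B_1}u_j\le C(n,\sigma)\,L_j$ and $u_j\le 2^{(n-2\sigma)/2}L_j$ on $B_{d_j/2}(x_j)$, with the half-ball $\B^+_{d_j/2}(X_j)$ contained in $\B^+_3$.

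Next I would blow up at $x_j$. With $\mu_j:=L_j^{-2/(n-2\sigma)}\to0$, the functions $V_j(Y):=L_j^{-1}U_j(X_j+\mu_j Y)$ solve the same degenerate equation with nonlinear Neumann condition on $\B^+_{R_j}$, where $R_j=d_j/(2\mu_j)=\tfrac12\mathcal{M}_j^{2/(n-2\sigma)}\to\infty$, and satisfy $0\le V_j\le 2^{(n-2\sigma)/2}$ there with $v_j(0)=1$. By the regularity theory of \cite{JLX} and the Harnack inequality for $\mathrm{div}(t^{1-2\sigma}\nabla\,\cdot\,)$, a subsequence converges in $C^0_{loc}(\overline{\R^{n+1}_+})$ to a nonnegative solution $V_\infty$ of \eqref{eq:ex01} with $v_\infty(0)=1$ and $v_\infty$ bounded; hence the origin is removable for $V_\infty$, and by Theorem 1.5 of \cite{JLX} the trace $v_\infty$ is a standard bubble. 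In particular $v_\infty$ is strictly positive with $c(1+|y|)^{-(n-2\sigma)}\le v_\infty(y)\le C(1+|y|)^{-(n-2\sigma)}$ for some $c,C>0$ depending only on $n,\sigma$.

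The core of the proof is to turn this convergence into the quantitative decay estimate
\[
u_j(x)\le C\,L_j^{-1}|x-x_j|^{-(n-2\sigma)}\qquad\text{for }\ 0<|x-x_j|\le\delta_0,
\]
with $\delta_0,C$ depending only on $n,\sigma$. For this I would run the refined blow-up analysis of Schoen: a selection step (passing to a subsequence, and reselecting the center inside a bubble cluster if necessary) arranges that $x_j$ is an \emph{isolated} blow-up point, and one then proves it is \emph{simple}. The non-simple alternative is excluded by the Pohozaev identity adapted to the degenerate operator — testing $\mathrm{div}(t^{1-2\sigma}\nabla U_j)=0$ against the conformal field $Y\cdot\nabla U_j+\tfrac{n-2\sigma}{2}U_j$ on $\B^+_r$ produces, at the critical exponent, an identity balancing a term on $\pa'\B_r$ built from $u_j^{2n/(n-2\sigma)}$ against a $t^{1-2\sigma}$-weighted boundary term on $\pa\B_r^+$, and tracking the sign of its leading ``mass'' term along an isolated non-simple blow-up yields a contradiction. (Alternatively one can rule out a nontrivial neck directly, by rescaling $U_j$ in the neck region and observing that the limit is a global solution of \eqref{eq:ex01}, hence a bubble by Theorem 1.5 of \cite{JLX} or, if the origin is non-removable, cylindrically symmetric and radially decreasing by Theorem~\ref{thm:gs} — neither being compatible with a neck.) This Pohozaev/decay step is the one I expect to be the main obstacle: it is the part that genuinely requires the blow-up machinery, and every ingredient has to be re-established for the weight $t^{1-2\sigma}$ and its Neumann boundary condition.

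Granting the decay estimate, the conclusion is short. Choose $\delta_0<2-\rho$. Then any $x$ with $|x-x_j|=\delta_0$ lies in $B_2$, since $|x_j|<\rho$, so $\min_{B_2}u_j\le u_j(x)\le C\,L_j^{-1}\delta_0^{-(n-2\sigma)}$. Combining with $\max_{B_1}u_j\le C(n,\sigma)\,L_j$ gives $\max_{B_1}u_j\cdot\min_{B_2}u_j\le C(n,\sigma)$, contradicting the assumption and proving \eqref{eq:harnack}.
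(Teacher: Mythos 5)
Your setup (reduction to $R=1$, contradiction hypothesis, Schoen-type selection of $\bar x_j$, rescaling by $u_j(\bar x_j)$, and convergence to a global bubble via the regularity theory and Liouville theorem of \cite{JLX}) coincides with the paper's. But the core of your argument --- the decay estimate $u_j(x)\le C L_j^{-1}|x-x_j|^{2\sigma-n}$ on a fixed annulus, to be obtained from ``isolated simple blow-up'' theory and a Pohozaev identity --- is a genuine gap, and you flag it yourself as the main obstacle without closing it. The difficulty is not merely technical: in the standard implementations of this machinery (Schoen, Li, and also \cite{JLX} for the fractional Nirenberg problem), the step where one rules out bubble clusters/towers and shows that a selected blow-up point is \emph{isolated} (with a radius bounded below independently of $j$), and then \emph{simple}, uses the max--min inequality \eqref{eq:harnack} as an input. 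Your ``reselecting the center inside a bubble cluster if necessary'' is exactly the place where that input is normally invoked, so as written the argument is circular, or at best leaves the hardest part of the proof unaddressed. Note also that your route uses the hypothesis $\max_{B_1}u_j\cdot\min_{B_2}u_j>j$ only at the very last line; a correct proof must exploit the $\min_{B_2}u_j$ factor much earlier, since the decay estimate you want is false for, say, a bubble plus a large constant background --- configurations that can only be excluded by using the largeness of the product.

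What the paper actually does at this point is shorter and self-contained, and it is worth seeing where the product hypothesis enters. After the blow-up one runs the moving sphere method directly on the rescaled functions $W_j$ on the expanding half-balls $\B^+_{\Gamma_j}$, $\Gamma_j=u_j(\bar x_j)^{2/(n-2\sigma)}$, aiming to show $(W_j)_{X_0,\lda}\le W_j$ for every fixed center $X_0$ and all $\lda\le\lda_0$, hence in the limit $w_{x,\lda}\le w$ for all $x,\lda$, which by the calculus lemma of Li--Zhang \cite{LZhang} forces $w\equiv const$ and contradicts the bubble form \eqref{eq:cl5}. The only obstruction to running the sphere all the way to any fixed $\lda_0$ is the outer boundary $\pa''\B^+_{\Gamma_j/2}$, where one must verify $W_j\ge (\lda_2/|\xi-X_0|)^{n-2\sigma}\inf_{\pa''\B_{\lda_2}(X_0)}W_j\approx C\,\Gamma_j^{2\sigma-n}=C\,u_j(\bar x_j)^{-2}$. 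This is exactly where Lemma \ref{lem:simple} and the hypothesis are used: $\inf_{\B^+_{\Gamma_j/2}}W_j\ge c\,\min_{\overline B_2}u_j/u_j(\bar x_j)\ge c\,j/u_j(\bar x_j)^{2}$, which dominates $C\,u_j(\bar x_j)^{-2}$ for large $j$. Lemma \ref{lem:simple} is needed precisely because, unlike in Theorem \ref{thm:a}, the $W_j$ do not come from a single fixed solution, so the boundary lower bound cannot be inherited from one function. If you replace your Pohozaev/simple-blow-up step by this boundary estimate and the moving sphere argument (which you would anyway have to develop for the weight $t^{1-2\sigma}$, but which is already carried out in the proof of Theorem \ref{thm:a}), your proof closes.
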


Since \eqref{eq:maineq} is conformally invariant, \eqref{eq:ex0} is also invariant under those Kelvin transformations with respect to the balls centered on $\pa\R^{n+1}_+$. More precisely,
for each $\bar x\in\R^n$ and $\lda>0$, we define, $\X=(\bar x,0)$, and
\be\label{kelvin}
U_{\X, \lda}(\xi):=\left(\frac{\lda}{|\xi-\X|}\right)^{n-2\sigma}U\left(\X+\frac{\lda^2(\xi-\X)}{|\xi-\X|^2}\right),
\ee
the Kelvin transformation of $U$ with respect to the ball $\B_{\lda}(\X)$. If $U$ is a solution of \eqref{eq:ex0}, then $U_{\bar X,\lda}$ is a solution of \eqref{eq:ex0} in the corresponding domain. This allows us to use the moving sphere method introduced by Li and Zhu \cite{LZhu}. And we adapt some arguments from \cite{Li06}. But there are extra difficulties.  One is the degeneracy of \eqref{eq:ex0}. The others would be those extra efforts to obtain the estimates of $U$ from those of its trace $u$. Furthermore, the estimates for $u$ inherited from $U$ sometimes are too weak to apply.

A further goal would be to show that the trace of every solution of \eqref{eq:ex0} with a non-removable singularity the origin is asymptotically close to the trace of a global solution of \eqref{eq:ex01}. This is true when $\sigma=1$ and it was proved in \cite{CGS} and \cite{KMPS}. And it is also true in the case of some conformally invariant fully nonlinear equations \cite{HLT}. A missing ingredient in our case to analyze the solutions of \eqref{eq:ex01} is the ODEs analysis compared to the case when $\sigma=1$. We plan to continue in future work.

The paper is organized as follows. Section \ref{sec: preliminaries} includes two propositions: a Harnack inequality and a maximum principle, which will be used often throughout the paper. In Section \ref{sec:bd}, we obtain the blow up upper and lower bounds and prove Theorem \ref{thm:a}. Theorem \ref{thm:gs} on cylindrical symmetry of global solutions of \eqref{eq:ex01} is proved in Section \ref{sec:global}. Section \ref{sec:asymptotical} is devoted to asymptotically radial symmetry property of solutions of \eqref{eq:ex0}. Finally, we prove the Harnack inequality \eqref{eq:harnack} in Section \ref{sec:harnack}.

\bigskip

\noindent\textbf{Acknowledgements:} L. Caffarelli was supported in part by NSF grants DMS-1065926 and DMS-1160802. Y. Sire was supported in part by grant ANR ``HAB" and ERC grant ``EPSILON". J. Xiong was supported in part by the First Class Postdoctoral
Science Foundation of China (No. 2012M520002). Both T. Jin and J. Xiong thank Professor Y.Y. Li for helpful discussions and constant encouragement, and for informing us a useful lemma in \cite{LL} .

\section{Preliminaries} \label{sec: preliminaries}

We begin with introducing some notations and some propositions which will be used in our arguments. We denote $\B_R(X)$ as the ball in $\R^{n+1}$ with radius $R$ and center $X$, $\B^+_R(X)$ as $\B_R(X)\cap \R^{n+1}_+$, and $B_R(x)$ as the ball in $\R^{n}$ with radius $R$ and center $x$. We also write $\B_R(0), \B^+_R(0), B_R(0)$ as $\B_R, \B_R^+, B_R$ for short respectively. For a domain $D\subset \R^{n+1}_+$ with boundary $\pa D$, we denote $\pa' D$ as the interior of $\overline D\cap \pa \R^{n+1}_+$ in $\R^n=\partial\R^{n+1}_+$ and $\pa''D=\pa D \setminus \pa' D$. Thus, $\pa' \B_R^+(X)=\B_R(X)\cap\R^n=B_R$, and $\pa'' \B_{R}^+(X)=\pa \B_{R}(X)\cap\R^{n+1}_+$.

We say
$U\in W^{1,2}_{loc}(t^{1-2\sigma},\overline {\R^{n+1}_+})$ if $U\in W^{1,2}(t^{1-2\sigma},\B_R^+)$ for every $R>0$, and $U\in W^{1,2}_{loc}(t^{1-2\sigma},\overline {\R^{n+1}_+}\setminus\{0\})$ if $U\in W^{1,2}(t^{1-2\sigma},\B_R^+\setminus\overline \B_\va^+)$ for any all $R>\va>0$. 

The following two propositions will be used frequently in our arguments, whose proofs can be found in \cite{JLX}. We state them here for convenience. The first is a Harnack inequality (see also \cite{CS, TX}).

\begin{prop}\label{prop:harnack} Let $U \in W^{1,2}(t^{1-2\sigma}, \B_1^+)$ be a nonnegative weak solution of
\[
\begin{cases}
\begin{aligned}
\mathrm{div}(t^{1-2\sigma} \nabla U)&=0 & \quad &\mbox{in }\B_1^+,\\
\frac{\pa U}{\pa \nu^\sigma}(x,0) &= a(x) U(x,0) &\quad &\mbox{on } B_1.
\end{aligned}
\end{cases}
\]
If $a\in L^p(B_1)$ for some $p>\frac{n}{2\sigma}$, then we have
\[
\sup_{ \B_{1/2}^+} U\leq C \inf_{ \B_{1/2}^+} U,
\]
where $C$ depends only on $n,\sigma$ and $\|a\|_{L^p(B_{1})}$.
\end{prop}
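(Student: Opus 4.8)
The plan is to run Moser's iteration adapted to the degenerate operator $\mathrm{div}(t^{1-2\sigma}\nabla\,\cdot\,)$, treating the boundary relation $\frac{\pa U}{\pa\nu^\sigma}(x,0)=a(x)U(x,0)$ as a lower-order term to be absorbed via a trace Sobolev inequality. Since $\sigma\in(0,1)$, the weight $t^{1-2\sigma}$ lies in the Muckenhoupt class $A_2$; away from $\{t=0\}$ the equation is uniformly elliptic with a smooth $A_2$ weight, so the interior Harnack inequality of Fabes--Kenig--Serapioni applies on any half-ball compactly contained in $\R^{n+1}_+$, and the only real content is the behaviour near the flat part $\pa'\B_1^+=B_1$. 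First I would record the two functional inequalities that drive the iteration: the weighted interior Sobolev inequality
\[
\left(\int_{\B_r^+}t^{1-2\sigma}|V|^{2\chi}\,\ud X\right)^{1/\chi}\le C\int_{\B_r^+}t^{1-2\sigma}\left(|\nabla V|^2+r^{-2}V^2\right)\ud X,\qquad \chi=\frac{n+2-2\sigma}{n-2\sigma},
\]
reflecting that the measure $t^{1-2\sigma}\ud X$ has homogeneous dimension $n+2-2\sigma$, together with the trace Sobolev inequality
\[
\left(\int_{B_r}|V(x,0)|^{\frac{2n}{n-2\sigma}}\,\ud x\right)^{\frac{n-2\sigma}{n}}\le C\int_{\B_r^+}t^{1-2\sigma}\left(|\nabla V|^2+r^{-2}V^2\right)\ud X ,
\]
both standard in the Caffarelli--Silvestre setting.

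\emph{Local boundedness of subsolutions.} For a nonnegative subsolution $U$ and $\beta\ge 1$, I would test with $\varphi=\eta^2\bar U_M^{\,2\beta-1}$, $\bar U_M=\min(U,M)$, $\eta$ a cutoff between concentric half-balls, and let $M\to\infty$. This produces an energy inequality of the form
\[
\int t^{1-2\sigma}|\nabla(\eta U^\beta)|^2\le C\beta^2\int t^{1-2\sigma}|\nabla\eta|^2\,U^{2\beta}+C\beta\int_{B_1}|a(x)|\,(\eta U^\beta)^2(x,0)\,\ud x .
\]
Writing $v=(\eta U^\beta)(\cdot,0)$ and using Hölder's inequality with exponents $p$ and $p'=\frac{p}{p-1}$, the boundary term is bounded by $\|a\|_{L^p(B_1)}\|v\|_{L^{2p'}(B_1)}^2$. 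The hypothesis $p>\frac{n}{2\sigma}$ is exactly the condition $2p'<\frac{2n}{n-2\sigma}$, so by interpolation and Young's inequality $\|v\|_{L^{2p'}}^2\le\epsilon\|v\|_{L^{2n/(n-2\sigma)}}^2+C(\epsilon)\|v\|_{L^2}^2$; the trace Sobolev inequality controls the first summand by the gradient term, which for $\epsilon$ small (depending on $\|a\|_{L^p}$) is absorbed into the left-hand side, leaving a controlled multiple of $\int t^{1-2\sigma}U^{2\beta}$. Combining with the interior Sobolev inequality gives a reverse Hölder inequality $\|U\|_{L^{2\beta\chi}(t^{1-2\sigma},\B_{r'}^+)}\le(C\beta)^{1/\beta}\|U\|_{L^{2\beta}(t^{1-2\sigma},\B_{r}^+)}$ on shrinking half-balls; iterating over $\beta=\chi^k$ and summing the resulting convergent series yields $\sup_{\B_{1/2}^+}U\le C\|U\|_{L^2(t^{1-2\sigma},\B_{3/4}^+)}$.

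\emph{Weak Harnack for supersolutions.} For a nonnegative supersolution (after replacing $U$ by $U+\delta$ and letting $\delta\to 0$ at the end), testing with $\eta^2/U$ gives $\int t^{1-2\sigma}|\nabla\log U|^2\eta^2\le C\int t^{1-2\sigma}|\nabla\eta|^2+C\int_{B_1}|a|\eta^2$, and the boundary term is now merely $\le\|a\|_{L^p(B_1)}|B_1|^{1/p'}$, so $\log U$ has bounded weighted oscillation on half-balls (a weighted Poincaré/John--Nirenberg estimate up to the flat boundary). Running the same iteration as above with the test function $\eta^2U^{2\beta-1}$ now for $\beta<\tfrac12$, including negative $\beta$, controls both $\big(\dashint_{\B_{3/4}^+}U^{p_0}t^{1-2\sigma}\big)^{1/p_0}$ from above by $\big(\dashint U^{-p_0}t^{1-2\sigma}\big)^{-1/p_0}$ and the latter's ``negative'' counterpart by $\inf_{\B_{1/2}^+}U$; the abstract lemma of Bombieri--Giusti type (the lemma from \cite{LL} mentioned in the acknowledgements) merges these with the logarithmic bound to give, for some small $p_0>0$, $\big(\dashint_{\B_{3/4}^+}U^{p_0}t^{1-2\sigma}\big)^{1/p_0}\le C\inf_{\B_{1/2}^+}U$.

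\emph{Conclusion.} Since a solution $U$ is both a subsolution and a supersolution, the two steps combine to
\[
\sup_{\B_{1/2}^+}U\le C\left(\dashint_{\B_{3/4}^+}U^{p_0}\,t^{1-2\sigma}\right)^{1/p_0}\le C\inf_{\B_{1/2}^+}U ,
\]
with $C=C(n,\sigma,\|a\|_{L^p(B_1)})$, which is the assertion. The main obstacle is the absorption of the boundary integral uniformly along the Moser iteration: one must verify that the constant generated by Hölder, interpolation, and the trace Sobolev inequality does not degenerate as $\beta\to\infty$, and it is precisely the strict subcriticality $p>\frac{n}{2\sigma}$ that makes this possible; a secondary technical point is proving the logarithmic/BMO estimate up to the flat boundary in the weighted setting.
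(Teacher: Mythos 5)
Your proposal is correct in outline and is essentially the argument behind the paper's statement: the paper gives no proof of this proposition but defers to \cite{JLX} (see also \cite{CS, TX}), and those references establish it by exactly this weighted Moser iteration --- local boundedness of subsolutions plus a weak Harnack inequality for supersolutions up to the flat boundary, with the boundary term $\int_{B_1} |a| v^2$ absorbed through the trace Sobolev inequality, the condition $p>\frac{n}{2\sigma}$ guaranteeing $2p'<\frac{2n}{n-2\sigma}$ so that the absorption constant grows only polynomially in $\beta$. One small attributional slip: the ``useful lemma in \cite{LL}'' mentioned in the acknowledgements is the calculus lemma used in Section 5 to deduce $|\nabla \log u(x)|\le \frac{n-2\sigma}{|x|}$ from the moving-sphere inequality, not a Bombieri--Giusti lemma, though the Bombieri--Giusti (or John--Nirenberg) route you invoke for merging the positive and negative exponent estimates is of course the standard one.
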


The second one is on a maximum principle for positive supersolutions with an isolated singularity.
\begin{prop}[Proposition 3.1 in \cite{JLX}]\label{prop:liminf}  
Suppose that $U\in W^{1,2}(\B_1^+\setminus\overline \B_\va)$ is a solution of
\[
\begin{cases}
\begin{aligned}
\mathrm{div}(t^{1-2\sigma} \nabla U)&\le 0 & \quad& \mbox{in }\B_1^+,\\
\frac{\pa U}{\pa \nu^\sigma} &\ge 0 &\quad& \mbox{on } B_1\setminus \overline B_\va
\end{aligned}
\end{cases}
\]
for every $0<\va<1$. If $U\in C(\B_1^+ \cup B_1\setminus \{0\})$ and $U>0$ in $\B_1^+ \cup B_1 \setminus \{0\}$, then
\[
\liminf_{X\to 0}U(X)>0.
\]
\end{prop}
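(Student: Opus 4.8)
The plan is to compare $U$, on annular regions centered at the origin, with the explicit solution $H(X):=|X|^{2\sigma-n}$ of the homogeneous extension problem, and then to extract from the resulting family of inequalities a concavity — hence a monotonicity — property of the spherical minimum of $U$, which forces that minimum to stay bounded away from $0$.

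Concretely, I would proceed as follows. First, for $0<r<1$ set $m(r):=\dmin_{\{|X|=r,\,t\ge 0\}}U$; since $U\in C(\B_1^+\cup B_1\setminus\{0\})$ is positive there and each slice $\{|X|=r,\ t\ge 0\}$ is a compact set missing the origin, $m(r)$ is well defined, positive, and continuous in $r$. Next, a direct computation gives $\mathrm{div}(t^{1-2\sigma}\nabla H)=0$ in $\R^{n+1}_+\setminus\{0\}$, and since $t^{1-2\sigma}\pa_t H=-(n-2\sigma)|X|^{2\sigma-n-2}\,t^{2-2\sigma}\to 0$ as $t\to 0^+$ for $x\neq 0$, also $\frac{\pa H}{\pa\nu^\sigma}(x,0)=0$ for $x\ne 0$. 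Then, fixing $0<\rho<R<1$, I choose constants $a,b$ so that $W_0:=aH+b$ equals $m(\rho)$ on $\{|X|=\rho\}$ and $m(R)$ on $\{|X|=R\}$ (possible since $\rho^{2\sigma-n}\neq R^{2\sigma-n}$); then $W_0$ solves the homogeneous problem on the annulus $A:=\B_R^+\setminus\overline{\B_\rho^+}$ with vanishing conormal derivative on $\pa'A=B_R\setminus\overline{B_\rho}$, so $W:=U-W_0$ is a supersolution on $A$ with $\frac{\pa W}{\pa\nu^\sigma}\ge 0$ on $\pa'A$ and $W\ge 0$ on $\pa''A=\{|X|=\rho,\,t\ge0\}\cup\{|X|=R,\,t\ge0\}$ (by definition of $m(\rho)$ and $m(R)$). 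A weak maximum principle — test the weak formulation of the equation for $W$ against $W^-=\max\{-W,0\}$, which vanishes on $\pa''A$, to get $\int_A t^{1-2\sigma}|\nabla W^-|^2\,\ud X\le 0$ — then yields $U\ge W_0$ on $A$; evaluating on the sphere $\{|X|=r\}$ with $\rho<r<R$ gives $m(r)\ge a\,r^{2\sigma-n}+b$.

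Finally, since $2\sigma-n<0$ the substitution $s=r^{2\sigma-n}$ carries $(0,1)$ decreasingly onto $(1,\infty)$, and setting $\varphi(s):=m(s^{1/(2\sigma-n)})$ the inequality just obtained says exactly that $\varphi$ lies above the chord joining $\bigl(\rho^{2\sigma-n},\varphi(\rho^{2\sigma-n})\bigr)$ and $\bigl(R^{2\sigma-n},\varphi(R^{2\sigma-n})\bigr)$; letting $\rho,R$ vary over $(0,1)$, this means $\varphi$ is concave on $(1,\infty)$. A positive concave function on a half-line is non-decreasing (a negative one-sided slope would force $\varphi\to-\infty$ and contradict positivity), so $\dlim_{s\to\infty}\varphi(s)$ exists in $(0,\infty]$ and dominates $\varphi(s_0)>0$ for any fixed $s_0$. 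Translating back, $\dlim_{r\to 0^+}m(r)$ exists and is positive, whence $\liminf_{X\to 0}U(X)\ge\dlim_{r\to 0^+}m(r)>0$.

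The only step that genuinely needs care is the weak maximum principle on the annulus $A$ for the degenerate operator $\mathrm{div}(t^{1-2\sigma}\nabla\,\cdot\,)$ with the mixed Dirichlet (on $\pa''A$) / Neumann (on $\pa'A$) boundary condition; this is routine and is the integration-by-parts argument sketched above (and is in any case available in \cite{JLX}). The verification that $|X|^{2\sigma-n}$ solves the homogeneous extension equation with vanishing conormal derivative away from the origin is a short calculation, and the remaining bookkeeping — the concavity reformulation and the elementary fact about positive concave functions — is elementary.
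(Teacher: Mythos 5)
Your argument is correct. Note that the paper does not actually prove this proposition: it is quoted verbatim as Proposition 3.1 of \cite{JLX}, and the proof there rests on exactly the ingredient you isolate, namely comparison with the radial solution $|X|^{2\sigma-n}$ of the homogeneous degenerate problem via the weak maximum principle for the mixed Dirichlet/Neumann boundary value problem on annuli. Where you differ is in the packaging: the standard route fixes $c_0=\min_{\pa''\B_{1/2}^+}U>0$ and compares $U$ on $\B_{1/2}^+\setminus\overline{\B_\va^+}$ with the single barrier $\Phi_\va(X)=c_0\,\bigl(|X|^{2\sigma-n}-\va^{2\sigma-n}\bigr)/\bigl((1/2)^{2\sigma-n}-\va^{2\sigma-n}\bigr)$, which vanishes on the inner sphere; letting $\va\to0$ gives $U\ge c_0$ on all of $\B_{1/2}^+\setminus\{0\}$ in one stroke. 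Your version instead runs a Hadamard three-spheres argument, deducing concavity of $m$ as a function of $r^{2\sigma-n}$ and then invoking the elementary fact that a positive concave function on a half-line is non-decreasing. Both are correct and rest on the same maximum principle; yours yields slightly more (monotonicity of the transformed spherical minimum as $r\downarrow0$, hence existence of $\lim_{r\to0}m(r)$), at the cost of having to justify continuity of $m$ and the chord characterization of concavity, while the one-barrier argument is shorter and gives the explicit lower bound $\min_{\pa''\B_{1/2}^+}U$. The only step in your write-up that genuinely needs care is the one you flag, i.e.\ the admissibility of $W^-$ as a test function for the weighted operator with mixed boundary conditions, which is routine given that $U$ lies in the weighted $W^{1,2}$ space on every annulus away from the origin.
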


\section{Upper bound and lower bound near a singularity}\label{sec:bd}

In this section, we shall prove Theorem \ref{thm:a}, which will be used in the proof of asymptotical symmetry.

\begin{prop}\label{prop:up bd}
Suppose that $U$ is a nonnegative solution of \eqref{eq:ex0}. Then
\be \label{eq:clc}
\limsup_{|x|\to 0} |x|^{\frac{n-2\sigma}{2}}u(x)<\infty.
\ee
\end{prop}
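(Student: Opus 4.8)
The plan is to argue by contradiction using a blow-up (point-selection) argument combined with the scaling invariance of \eqref{eq:ex0} under the Kelvin-type transformations. Suppose \eqref{eq:clc} fails; then there is a sequence $x_i\to 0$ with $|x_i|^{\frac{n-2\sigma}{2}}u(x_i)\to\infty$. First I would run a standard maximum-selection argument of Schoen type: replacing $x_i$ by a nearby point, I would find points $y_i\to 0$ at which the scaled quantity $(\,|y_i|/2-|x-y_i|\,)^{\frac{n-2\sigma}{2}}u(x)$, restricted to $B_{|y_i|/2}(y_i)$, attains its maximum, so that $u(y_i)$ is comparable to this maximum and $M_i:=u(y_i)\to\infty$ while still controlling $u$ on a ball of radius $\sim M_i^{-\frac{2}{n-2\sigma}}$ around $y_i$ by $C M_i$.

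Next I would rescale: set $\mu_i=M_i^{-\frac{2}{n-2\sigma}}\to 0$ and define $V_i(X)=\mu_i^{\frac{n-2\sigma}{2}}U(y_i+\mu_i X)$, where $y_i=(y_i',0)$. Each $V_i$ is again a nonnegative solution of the same extension equation \eqref{eq:ex0} on a ball whose radius tends to infinity, with $V_i(0,0)=1$ and $V_i\le C$ on $\pa' \B_R^+$ for each fixed $R$ and large $i$, by the selection step. Using the Harnack inequality (Proposition \ref{prop:harnack}) together with the interior and boundary regularity estimates from \cite{JLX}, I would obtain uniform local Hölder bounds for $V_i$ and extract a subsequence converging in $C^0_{loc}$ on $\overline{\R^{n+1}_+}$ to a nonnegative limit $V$. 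The limit solves
\[
\begin{cases}
\mathrm{div}(t^{1-2\sigma}\nabla V)=0 & \mbox{in }\R^{n+1}_+,\\
\frac{\pa V}{\pa\nu^\sigma}(x,0)=V^{\frac{n+2\sigma}{n-2\sigma}}(x,0) & \mbox{on }\R^n,
\end{cases}
\]
with $V(0,0)=1$, so $V\not\equiv 0$; note the singularity at the origin has disappeared because the rescaled singular point $-y_i'/\mu_i$ escapes to infinity (this is exactly where the selection estimate, guaranteeing the bad scale $\mu_i$ is much smaller than $|y_i|$, is used). By the Liouville-type classification of entire solutions (Theorem 1.5 in \cite{JLX}), $V(x,0)$ is a standard bubble, in particular bounded.

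Finally I would derive the contradiction by a Pohozaev / energy-balance identity on large half-balls $\B_R^+$ for $V_i$: multiplying the equation by $X\cdot\nabla V_i+\frac{n-2\sigma}{2}V_i$ and integrating, the boundary terms on $\pa'' \B_R^+$ can be made small after choosing $R$ suitably (passing to a further subsequence in $R$), while the interior term picks up a definite negative contribution, contradicting the fact that the critical-exponent Pohozaev invariant of the limit bubble vanishes — equivalently, one contradicts the nonexistence of a singular solution with this particular decay profile. An alternative, perhaps cleaner route avoiding Pohozaev is to use the limit bubble $V$ directly: since $V$ is a bounded solution on all of $\R^{n+1}_+$ but is obtained as a limit of rescalings concentrating at $y_i\to 0$ on the punctured ball, one can test against the original singular solution and the supersolution maximum principle (Proposition \ref{prop:liminf}) to see that the mass carried near the singularity is incompatible with the finite energy of $V$.

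I expect the main obstacle to be the rescaling/compactness step in the degenerate weighted setting: one must ensure that the Harnack inequality and boundary Hölder estimates apply uniformly after rescaling (the weight $t^{1-2\sigma}$ is scale-invariant, which helps), that the nonlinear Neumann datum $U^{4\sigma/(n-2\sigma)}$ stays in the right $L^p$ class so Proposition \ref{prop:harnack} is applicable with uniform constants, and that the limit equation is genuinely satisfied \emph{up to and including} the boundary $\{t=0\}$ with no loss of the nonlinear term. Handling the passage to the limit of $\frac{\pa U}{\pa\nu^\sigma}$, defined as a weighted normal derivative limit, requires the uniform estimates near $\{t=0\}$ from \cite{JLX} rather than merely interior ones.
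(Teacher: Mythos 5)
Your point-selection, rescaling, compactness, and identification of the limit as the standard bubble via the Liouville theorem all coincide with the paper's proof of Proposition \ref{prop:up bd}. The genuine gap is in the final step, where the contradiction is supposed to come from. Producing a bubble in the limit is not by itself contradictory, and the Pohozaev route you propose does not close: the rescaled functions $W_j$ solve the equation on half-balls $\B_{R_j}^+$ that contain \emph{no} singularity (the image of the origin has norm $|\bar x_j|\,u(\bar x_j)^{\frac{2}{n-2\sigma}}\ge 2R_j$), and at the critical exponent $\frac{n+2\sigma}{n-2\sigma}$ the Pohozaev identity has no interior term of a definite sign --- that sign only appears in the subcritical regime. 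Hence $P(W_j,R)=0$ for every $R<R_j$, the limit bubble likewise has vanishing Pohozaev invariant, and your energy balance is consistent rather than contradictory. (The Pohozaev identity is indeed used in this paper, but for the \emph{lower} bound, Proposition \ref{prop:low bd}, where the blow-down limit is $a|X|^{2\sigma-n}+b$ with $a=b=1/2$ and the invariant is computed to be strictly negative.) Your alternative suggestion, that the ``mass near the singularity is incompatible with the finite energy of $V$,'' is not an argument as stated: the concentration scale $u(\bar x_j)^{-\frac{2}{n-2\sigma}}$ is much smaller than $\mathrm{dist}(\bar x_j,0)$, so the bubble never interacts with the singularity.

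What the paper actually does at this stage is run the moving sphere method on $W_j$ itself (not only on the limit), centered at an arbitrary $X_0=(x_0,0)$, and show that the spheres can be moved up to \emph{any} prescribed radius $\lda_0$ once $j$ is large. The mechanism that prevents the procedure from stopping is quantitative and is exactly where the contradiction hypothesis $|x_j|^{\frac{n-2\sigma}{2}}u(x_j)\to\infty$ enters a second time: on the outer boundary $\pa''\om_j$ one has $|\xi|\approx u(\bar x_j)^{\frac{2}{n-2\sigma}}$, so the Kelvin comparison function $\left(\frac{\lda_2}{|\xi-X_0|}\right)^{n-2\sigma}\inf_{\pa''\B_{\lda_2}(X_0)}W_j$ is of order $u(\bar x_j)^{-2}$ there, while the Harnack inequality gives $W_j\ge \frac{1}{Cu(\bar x_j)}$ on $\pa''\om_j$ (this is \eqref{eq:cl21}); the outer boundary therefore never obstructs the sphere. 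Passing to the limit yields $w_{x_0,\lda}\le w$ for all $x_0\in\R^n$ and all $\lda>0$, i.e.\ \eqref{eq:aim1}, and the calculus lemma of Li--Zhang then forces $w$ to be constant, contradicting \eqref{eq:cl5}. Without an argument of this type --- one that exploits the assumed super-fast blow-up rate through the boundary behavior of $W_j$ --- the blow-up analysis alone cannot yield a contradiction, so your proof as written does not go through.
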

\begin{proof} Suppose the contrary that there exists a sequence $\{x_j\} \subset B_1$ such that
\[
x_j\to 0\quad \mbox{as } j\to \infty,
\]
and
\be\label{eq:cl1}
|x_j|^{\frac{n-2\sigma}{2}}u(x_j)\to \infty\quad \mbox{as }j\to \infty.
\ee

Consider
\[
v_j(x):=\left(\frac{|x_j|}{2}-|x-x_j|\right)^{\frac{n-2\sigma}{2}} u(x),\quad |x-x_j|\leq \frac{|x_j|}{2}.
\]
Let $|\bar x_j-x_j|<\frac{|x_j|}{2}$ satisfy
\[
v_j(\bar x_j)=\max_{|x-x_j|\leq \frac{|x_j|}{2}}v_j(x),
\]
and let
\[
2\mu_j:=\frac{|x_j|}{2}-|\bar x_j-x_j|.
\]
Then
\be \label{eq:cl2}
0<2\mu_j\leq \frac{|x_j|}{2}\quad\mbox{and}\quad \frac{|x_j|}{2}-|x-x_j|\ge\mu_j \quad \forall ~ |x-\bar x_j|\leq \mu_j.
\ee
By the definition of $v_j$, we have
\be \label{eq:cl3}
(2\mu_j)^{\frac{n-2\sigma}{2}}u(\bar x_j)=v_j(\bar x)\ge v_j(x)\ge (\mu_j)^{\frac{n-2\sigma}{2}}u(x)\quad \forall ~ |x-\bar x_j|\leq \mu_j.
\ee
Thus, we have
\[
2^{\frac{n-2\sigma}{2}}u(\bar x_j)\ge u(x)\quad \forall ~ |x-\bar x_j|\leq \mu_j.
\]
We also have
\be\label{eq:cl4}
(2\mu_j)^{\frac{n-2\sigma}{2}}u(\bar x_j)=v_j(\bar x_j)\ge v(x_j)= \left(\frac{|x_j|}{2}\right)^{\frac{n-2\sigma}{2}}u(x_j)\to \infty\quad\mbox{as }i\to\infty.
\ee
Now, consider
\[
W_j(y,t)=\frac{1}{u(\bar x_j)}U\left(\bar x_j+\frac{y}{u(\bar x_j)^{\frac{2}{n-2\sigma}}}, \frac{t}{u(\bar x_j)^{\frac{2}{n-2\sigma}}}\right ), \quad (y,t)\in \om_j,
\]
where
\[
\om_j:=\left\{(y,t)\in \R^{n+1}_+| \left(\bar x_j+\frac{y}{u(\bar x_j)^{\frac{2}{n-2\sigma}}},\frac{t}{u(\bar x_j)^{\frac{2}{n-2\sigma}}}\right)\in  \B^+_{1}\setminus \{0\} \right\}.
\]
Let $w_j(y)=W_j(y,0)$. Then $W_j$ satisfies $w(0)=1$ and 
\be \label{eq:ext1}
\begin{cases}
\begin{aligned}
\mathrm{div}(t^{1-2\sigma} \nabla W_j)&=0 & \quad &\mbox{in }\om_j,\\
\frac{\pa W_j}{\pa \nu^\sigma}& = w_j^{\frac{n+2\sigma}{n-2\sigma}} &\quad &\mbox{on }\pa' \om_j.
\end{aligned}
\end{cases}
\ee
Moreover, it follows from \eqref{eq:cl3} and \eqref{eq:cl4} that 
\[
w_j(y)\leq 2^{\frac{n-2\sigma}{2}} \quad\mbox{in } B_{R_j},
\] 
where \[R_j:=\mu_j u(\bar x_j)^{\frac{2}{n-2\sigma}}\to \infty \mbox{ as } j\to \infty.\]

By Proposition \ref{prop:harnack}, for any given $\bar t>0$ we have
\[
0\leq W_j \leq C(\bar t)\quad \mbox{in }B_{R_j/2}\times [0,\bar t),
\]
where $C(\bar t)$ depends only on $n, \sigma$ and $\bar t$. 
Then by Corollary 2.1 and Theorem 2.7 in \cite{JLX} there exists some $\al>0$ such that for every $R>1$,
\[
\|W_j\|_{W^{1,2}(t^{1-2\sigma},\B^+_R)}+\|W_j\|_{C^\al(\overline \B^+_R)}+\|w_j\|_{C^{2,\al}(\overline B_R)}\le C(R),
\]
where $C(R)$ is independent of $i$. Thus, after passing to a subsequence, we have, for some nonnegative function
$W\in W^{1,2}_{loc}(t^{1-2\sigma},\overline{\mathbb{R}^{n+1}_+})\cap C^{\al}_{loc}(\overline{\mathbb{R}^{n+1}_+})$
\[
\begin{cases}
W_j&\rightharpoonup W\quad\mbox{weakly in }W^{1,2}_{loc}(t^{1-2\sigma},\R^{n+1}_+),\\
W_j&\rightarrow W\quad\mbox{in }C^{\al/2}_{loc}(\overline{\R^{n+1}_+}),\\
w_j&\rightarrow w\quad\mbox{in }C^2_{loc}(\R^n),
\end{cases}
\]
where $w(y)=W(y,0)$. Moreover, $W$ satisfies
\be \label{eq:ext2}
\begin{cases}
\begin{aligned}
\mathrm{div}(t^{1-2\sigma} \nabla W)&=0 & \quad &\mbox{in }\R^{n+1}_+,\\
\frac{\pa W}{\pa \nu^\sigma} &= w^{\frac{n+2\sigma}{n-2\sigma}}& \quad &\mbox{on }\pa\R^{n+1}_+,
\end{aligned}
\end{cases}
\ee
and $w(0)=1$.
By the Liouville theorem in \cite{JLX}, we have,
\be \label{eq:cl5}
w(y):=W(y,0)= \left(\frac{1}{1+|y|^2}\right)^\frac{n-2\sigma}{2},
\ee
upon some multiple, scaling and translation.

On the other hand, we are going to show that
\be\label{eq:aim1}
w_{\lda, x}(y)\leq w(y)\quad \forall~\lda>0, x\in \R^n, ~ |y-x|\ge\lda.
\ee
By an elementary calculus lemma in \cite{LZhang}, \eqref{eq:aim1} implies that $w\equiv constant$. This contradicts to \eqref{eq:cl5}.

Let us arbitrarily fix $x_0\in\R^n$ and $\lda_0>0$. Then for all $j$ large, we have $|x_0|<\frac{R_j}{10}, 0<\lda_0<\frac{R_j}{10}$. For $\lda>0$, we let 
\[
(W_j)_{X,\lda }(Y):= \left(\frac{\lda }{|Y-X|}\right)^{n-2\sigma}W_j\left(X+\frac{\lda^2(Y-X)}{|Y-X|^2}\right)
\]
for $Y\in \om_j$ with $|Y-X|\geq \lda$, which is the Kelvin transformation of $W_j$ with respect to the ball $\B_{X}(\lda)$. Let $X_0=(x_0,0)$.

\medskip

\emph{Claim 1}: There exists a positive real number $\lda_3$ such that for any $0<\lda<\lda_3$, we have
\[
(W_j)_{X_0,\lda }(\xi)\leq W_j(\xi) \quad \text{in } \om_j\backslash \B^+_{\lda}(X_0).
\]
The proof of Claim 1 consists of two steps as the proof of Lemma 3.2 in \cite{JLX}, which is inspired by \cite{CaL}.

\textit{Step 1.} We show that there exist $0<\lda_1<\lda_2<\lda_0$, which are independent on $j$, such that
\[
(W_j)_{X_0,\lda }(\xi)\leq W_j(\xi), ~\forall~0<\lda<\lda_1,~\lda<|\xi-X_0|<\lda_2.
\]
For every $0<\lda<\lda_1<\lda_2$, $\xi\in\pa'' \B_{\lda_2}(X_0)$, we have $X_0+\frac{\lda^2(\xi-X_0)}{|\xi-X_0|^2}\in \B^+_{\lda_2}(X_0)$. Thus we can choose $\lda_1=\lda_1(\lda_2)$ small such that
\begin{equation*}
\begin{split}
(W_j)_{X_0,\lda }(\xi)&=\left(\frac{\lambda}{|\xi-X_0|}\right)^{n-2\sigma}W_j\left(X_0+\frac{\lda^2(\xi-X_0)}{|\xi-X_0|^2}\right)\\
&\leq\left(\frac{\lda_1}{\lda_2}\right)^{n-2\sigma}\sup\limits_{\overline{\B_{\lda_2}^+(X_0)}}W_j\leq \inf_{\partial ''{\B_{\lda_2}^+(X_0)}}W_j\leq W_j(\xi),
\end{split}
\end{equation*}
where we used that $W_j$ converges to $W>0$ in $C^{\al/2}_{loc}(\overline{\R^{n+1}_+})$.
Hence
\[
(W_j)_{X_0,\lda }\leq W_j\quad \mbox{on } \partial ''(\B^+_{\lda_2}(X_0)\backslash \B^+_{\lda}(X_0))
\]
  for all $\lda_2>0$ and $0<\lda<\lda_1(\lda_2)$.

We will show that $(W_j)_{X_0,\lda }\leq W_j$ on $(\B^+_{\lda_2}(X_0)\backslash \B^+_{\lda}(X_0))$ if $\lda_2$ is small and $0<\lda<\lda_1(\lda_2)$.
Since $(W_j)_{X_0,\lda }$ also satisfies \eqref{eq:ext1} in $\B_{\lda_2}(X_0)^+\setminus\overline{\B_{\lda_1}^+(X_0)}$, we have
\begin{equation}\label{diff}
\begin{cases}
\mathrm{div}(t^{1-2\sigma}\nabla ((W_j)_{X_0,\lda }-W_j))&=0\quad \text{in}\quad \B_{\lda_2}^+(X_0)\backslash \overline{\B_{\lda}^+(X_0)},\\
\lim\limits_{t\to 0}t^{1-2\sigma}\partial_t ((W_j)_{X_0,\lda }-W_j)&\\ =
W_j^{\frac{n+2\sigma}{n-2\sigma}}(x,0)-(W_j)_{X_0,\lda }^{\frac{n+2\sigma}{n-2\sigma}}(x,0)
&\quad \text{on}\quad \partial '(\B_{\lda_2}^+(X_0)\backslash \overline{\B_{\lda}^+(X_0)}).
\end{cases}
\end{equation}
Let $((W_j)_{X_0,\lda }-W_j)^+:=\max(0,(W_j)_{X_0,\lda }-W_j)$ which equals to $0$ on $\pa''(\B^+_{\lda_2}(X_0)\backslash \B^+_{\lda}(X_0))$. Hence, by a density argument, we can use $((W_j)_{X_0,\lda }-W_j)^+$ as a test function in the definition of weak solution of \eqref{diff}. We will make use of the narrow domain technique
from \cite{BN}. With the help of the mean value theorem, we have
\begin{equation*}
\begin{split}
&\int_{\B_{\lda_2}^+(X_0)\backslash \B_{\lda}^+(X_0)} t^{1-2\sigma}|\nabla((W_j)_{X_0,\lda }-W_j)^+|^2\\
&=\int_{B_{\lda_2}(X_0)\backslash B_{\lda}(X_0)}((W_j)_{X_0,\lda }^{\frac{n+2\sigma}{n-2\sigma}}(x,0)-W_j^{\frac{n+2\sigma}{n-2\sigma}}(x,0))((W_j)_{X_0,\lda }-W_j)^+\\
&\leq C \int_{B_{\lda_2}(X_0)\backslash B_{\lda}(X_0)}(((W_j)_{X_0,\lda }-W_j)^+)^2 (W_j)_{X_0,\lda }^{\frac{4\sigma}{n-2\sigma}}\\
&\leq C\left(\int_{B_{\lda_2}(X_0)\backslash B_{\lda}(X_0)}(((W_j)_{X_0,\lda }-W_j)^+)^\frac{2n}{n-2\sigma}\right)^{\frac{n-2\sigma}{n}}\left(\int_{B_{\lda_2}(X_0)\backslash B_{\lda}(X_0)} (W_j)_{X_0,\lda }^{\frac{2n}{n-2\sigma}}\right)^{\frac{2\sigma}{n}}\\
& \leq C\left(\int_{\B_{\lda_2}^+(X_0)\backslash \B_{\lda}^+(X_0)} t^{1-2\sigma}|\nabla((W_j)_{X_0,\lda }-W_j)^+|^2\right)
\left(\int_{ B_{\lda_2}(X_0)} w_j^{\frac{2n}{n-2\sigma}}\right)^{\frac{2\sigma}{n}},
\end{split}
\end{equation*}
where Proposition 2.1 in \cite{JLX} is used in the last inequality and $C$ is a positive constant depending only on $n$ and $\sigma$. Since $w_j\to w$ in $C^2(B_{\lda_0}(X_0))$ as in \eqref{eq:cl4},
we can fix $\lda_2$ small independent of $j$ such that
\[
C\left(\int_{B_{\lda_2}(X_0)} w_j^{\frac{2n}{n-2\sigma}}\right)^{\frac{2\sigma}{n}}<1/2.
\]
Then 
\[
\nabla((W_j)_{X_0,\lda }-W_j)^+=0\quad \mbox{in } \B_{\lda_2}^+(X_0)\backslash \B_{\lda}^+(X_0).
\]
Since 
\[
((W_j)_{X_0,\lda }-W_j)^+=0 \quad\mbox{on }\partial ''(\B^+_{\lda_2}(X_0)\backslash \B^+_{\lda}(X_0)),\]
 we have 
 \[
 ((W_j)_{X_0,\lda }-W_j)^+=0\quad \mbox{in } \B_{\lda_2}^+(X_0)\backslash \B_{\lda}^+(X_0).
 \]
 We conclude that 
 \[
 (W_j)_{X_0,\lda }\leq W_j \quad\mbox{in } \B^+_{\lda_2}(X_0)\backslash \B^+_{\lda}(X_0)
 \] 
 for $0<\lda<\lda_1:=\lda_1(\lda_2)$.
\medskip

\textit{Step 2.}  We show that there exists $\lda_3\in (0,\lda_1)$ such that $\forall~0<\lda<\lda_3$,
\[
 (W_j)_{X_0,\lda }(\xi)\leq W_j(\xi),~\forall |\xi-X_0|>\lda_2,~\xi\in \om_j.
\]
Let $\phi_j(\xi)=\left(\frac{\lda_2}{|\xi-X_0|}\right)^{n-2\sigma}\inf\limits_{\partial'' \B_{\lda_2}(X_0)} W_j$,
which satisfies
\[
\begin{cases}
\begin{aligned}
  \mathrm{div}(t^{1-2\sigma}\nabla \phi_j)&=0&\quad& \mbox{in } \R^{n+1}_+\setminus \B_{\lda_2}^+(X_0)\\
   -\lim_{t\to 0}t^{1-2\sigma}\pa_t \phi_j(x,0)&=0&\quad& \mbox{on } \R^n\setminus \overline{B_{\lda_2}(X_0)},
 \end{aligned}
\end{cases}
\]
 and $\phi_j(\xi)\leq W_j(\xi)$ on $\partial'' \B_{\lda_2}(X_0)$. Let us examine them on $\pa''\om_j$.
 
 Since $u\ge 1/C>0$ on $\pa B_1$, it follows from the Harnack inequality (Proposition \ref{prop:harnack}) that
\be\label{eq:cl20}
W_j\geq \frac{1}{Cu(\bar x_j)}>0 \quad \mbox{on }\pa'' \om_j.
\ee
 Since $\frac{|x_j|}{2}\le |\bar x_j|\leq \frac{3|x_j|}{2}<<1$, for any $\xi\in \pa'' \om_j$, i.e., $\left|\bar X_j+\frac{\xi}{u(\bar x_j)^{\frac{2}{n-2\sigma}}}\right|=1$, we have
\[
|\xi|\approx u(\bar x_j)^{\frac{2}{n-2\sigma}}.
\]
 Thus
 \be\label{eq:cl21}
W_j(\xi)\geq \frac{1}{Cu(\bar x_j)}>\left(\frac{\lda_2}{|\xi-X_0|}\right)^{n-2\sigma}\inf\limits_{\partial'' \B_{\lda_2}(X_0)} W_j \quad \mbox{on }\pa'' \om_j,
\ee
where we used the fact that $W_j$ converges to a solution $W$ of \eqref{eq:ext2} locally uniformly in the last inequality. By the maximum principle,
\be\label{eq:cl22}
W_j(\xi)\geq \left(\frac{\lda_2}{|\xi-X_0|}\right)^{n-2\sigma}\inf\limits_{\partial'' \B_{\lda_2}(X_0)} W_j,~\forall~|\xi-X_0|>\lda_2,~\xi\in \om_j.
\ee
Let 
\[
\lda_3=\min(\lda_1, \lda_2(\inf\limits_{\partial'' \B_{\lda_2}(X_0)} W_j/\sup\limits_{ \B_{\lda_2}(X_0)} W_j)^{\frac{1}{n-2\sigma}}).
\]
Then for any $0<\lda<\lda_3,~|\xi-X_0|\geq \lda_2$, $\xi\in\om_j$, we have
\[
\begin{split}
 (W_j)_{X_0,\lda }(\xi)&\leq (\frac{\lda}{|\xi-X_0|})^{n-2\sigma}W_j(X_0+\frac{\lda^2(\xi-X_0)}{|\xi-X_0|^2})\\
&\leq (\frac{\lda_3}{|\xi-X_0|})^{n-2\sigma}\sup\limits_{\B_{\lda_2}(X_0)}W_j\\
&\leq (\frac{\lda_2}{|\xi-X_0|})^{n-2\sigma}\inf\limits_{\partial ''\B_{\lda_2}(X_0)}W_j\leq W_j(\xi).
\end{split}
\]
Claim 1 is proved.

We define
\[
\bar \lda:=\sup \{0<\mu\le \lda_0 | (W_j)_{X_0,\lda}(\xi)\leq W_{j}(\xi),\quad \forall~|\xi-X_0|\geq \lda, ~\xi\in \om_j,~\forall~ 0<\lda <\mu\},
\]
where $\lda_0$ and $X_0$ are fixed at the beginning. By Claim 1, $\bar\lda$ is well defined.

\medskip

\emph{Claim 2}: $\bar\lda=\lda_0.$

\medskip

To prove Claim 2, we argue by contradiction. Suppose $\bar\lda<\lda_0$. Similar to \eqref{eq:cl21}, we have that
\[
W_j(\xi)\geq \frac{1}{Cu(\bar x_j)}>(\frac{\lda_0}{|\xi-X_0|})^{n-2\sigma}\sup\limits_{\B_{\lda_0}(X_0)}W_j\ge(W_j)_{X_0,\bar\lda}(\xi) \quad \mbox{on }\pa'' \om_j.
\]
It follows from strong maximum principle that $(W_j)_{X_0,\bar\lda}(\xi)< W_{j}(\xi)$ if $|\xi-X_0|> \bar\lda, ~\xi\in \overline \om_j$. For $\delta>0$ small, which will be fixed later, denote $K_{\delta}=\{\xi\in\om_j: |\xi-X_0|\geq\bar\lda+\delta\}$.
By Proposition \ref{prop:liminf}, there exists $c_3=c_3(\delta)>0$ such that
\[
W_{j}(\xi)-(W_j)_{X_0,\bar\lda}(\xi)>c_3  \ \text{ in }\  K_{\delta}.
\]
By the uniform continuity of $W_j$ on compact sets, there exists $\va$ small such that for all $\bar\lda<\lda<\bar\lda+\va$
\[
(W_j)_{X_0,\bar\lda}-(W_j)_{X_0,\lda}>-c_3/2 \ \text{ in }\  K_{\delta}.
\]
Hence
\[
W_{j}-(W_j)_{X_0,\lda}>c_3/2 \ \text{ in } \ K_{\delta}.
\]
Now let us focus on the region $\{\xi\in\R^{n+1}_+: \lda\leq|\xi-X_0|\leq\bar\lda+\delta\}$.
Using the narrow domain technique as that in Claim 1, we can choose $\delta$
small (notice that we can choose $\va$ as small as we want) such that
\[
W_{j}\geq (W_j)_{X_0,\lda} \ \text{ in } \ \{\xi\in\R^{n+1}_+: \lda\leq|\xi-X_0|\leq\bar\lda+\delta\}.
\]
In conclusion, there exists $\va_1>0$ such that for all $\bar\lda<\lda<\bar\lda+\va_1$
\[
 (W_j)_{X_0,\lda}(\xi)\leq W_{j}(\xi),\quad \forall~|\xi-X_0|\geq \lda, ~\xi\in \overline \om_j,
\]
which contradicts with the definition of $\bar\lda$. Claim 2 is proved.

Thus 
\[
 (W_j)_{X_0,\lda}(\xi)\leq W_{j}(\xi),\quad \forall~|\xi-X_0|\geq \lda,~\xi\in \overline \om_j,~\forall~0<\lda\leq \lda_0.
\]
Sending $j\to \infty$, we have
\[
w_{x_0,\lda}(y)\leq w(y)\quad \forall~0<\lda\leq \lda_0,  ~ |y-x_0|\ge\lda.
\]
Since $x_0, \lda_0$ are arbitrary, \eqref{eq:aim1} has been verified. 

The proposition is proved.
\end{proof}

We remark that the above arguments also apply to subcritical cases.

One consequence of this upper bound is that every solution $U$ of \eqref{eq:ex0} satisfies the following the Harnack inequality, which will be used very frequently in this rest of the paper. 
\begin{lem}\label{lem:spherical harnack}
Suppose that $U$ is a nonnegative solution of \eqref{eq:ex0}. Then for all $0<r<1/4$, we have
\be\label{eq:spherical harnack}
\sup_{\B^+_{2r}\setminus\overline{\B^+_{r/2}}} U\le C \inf_{\B^+_{2r}\setminus\overline{\B^+_{r/2}}} U,
\ee
where $C$ is a positive constant independent of $r$. 
\end{lem}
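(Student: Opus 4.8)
The plan is to prove Lemma \ref{lem:spherical harnack} by rescaling to a fixed annulus and applying the interior Harnack inequality, Proposition \ref{prop:harnack}, on a suitable covering. First I would introduce the dilation $U_r(X):=r^{\frac{n-2\sigma}{2}}U(rX)$, which (since \eqref{eq:maineq}, and hence \eqref{eq:ex0}, is conformally invariant under this exact scaling) is again a nonnegative solution of the same degenerate equation \eqref{eq:ex0}, now on $\B^+_{2/r}\setminus\{0\}\supset \B^+_8\setminus\{0\}$ because $r<1/4$. Writing $u_r(x)=U_r(x,0)$ for its trace, the estimate \eqref{eq:spherical harnack} for $U$ on the annulus $\B^+_{2r}\setminus\overline{\B^+_{r/2}}$ is equivalent to the scale-invariant statement
\[
\sup_{\B^+_{2}\setminus\overline{\B^+_{1/2}}} U_r\le C \inf_{\B^+_{2}\setminus\overline{\B^+_{1/2}}} U_r,
\]
with $C$ independent of $r$, so it suffices to prove this.

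The next step is to rewrite the Neumann condition $\frac{\pa U_r}{\pa\nu^\sigma}(x,0)=u_r(x)^{\frac{n+2\sigma}{n-2\sigma}}$ in the linear form $\frac{\pa U_r}{\pa\nu^\sigma}(x,0)=a_r(x)U_r(x,0)$ with $a_r(x):=u_r(x)^{\frac{4\sigma}{n-2\sigma}}$, so that Proposition \ref{prop:harnack} applies once we control $\|a_r\|_{L^p}$ on balls in the relevant region for some $p>\frac{n}{2\sigma}$. Here is where Proposition \ref{prop:up bd} enters: \eqref{eq:clc} gives a constant $A$ with $|x|^{\frac{n-2\sigma}{2}}u(x)\le A$ for $|x|$ small, which after scaling becomes $u_r(x)\le A|x|^{-\frac{n-2\sigma}{2}}$, hence $a_r(x)\le A^{\frac{4\sigma}{n-2\sigma}}|x|^{-2\sigma}$ on, say, $1/4\le|x|\le 4$. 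On this range $|x|^{-2\sigma}$ is bounded, so $a_r$ is uniformly bounded in $L^\infty$ — and in particular in $L^p(B_\rho(x_0))$ for any fixed $p$ and any ball $B_\rho(x_0)$ with $1/4\le|x_0|\le 4$, $\rho$ small, with a bound independent of $r$. (One should note $u$ is continuous away from $0$ by the regularity result of \cite{JLX}, so $u_r$ and $a_r$ are genuine functions on the closed annular region, away from $0$; the only place where $u$ could blow up is the origin, which the annulus avoids.)

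With the uniform coefficient bound in hand, the final step is a standard chaining argument. Cover the compact annular region $\overline{\B^+_{2}}\setminus \B^+_{1/2}$ by finitely many half-balls $\B^+_{\rho}(X_i)$ (interior points) and half-balls $\B^+_\rho(x_i)$ centered on $\{t=0\}$, with $\rho$ chosen small enough that each doubled half-ball stays inside $\B^+_4\setminus\overline{\B^+_{1/4}}$; the number of these and the value of $\rho$ depend only on $n$. On each such half-ball, Proposition \ref{prop:harnack} (applied to $U_r$, after an affine change of coordinates sending $\B^+_\rho(x_i)$ to $\B^+_1$, which only changes the $L^p$ norm of the rescaled coefficient by a fixed factor, and which replaces interior balls by half-balls with coefficient $a\equiv 0$) gives $\sup_{\B^+_{\rho/2}(x_i)}U_r\le C_0\inf_{\B^+_{\rho/2}(x_i)}U_r$ with $C_0$ independent of $r$. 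Since the annular region is connected, any two of its points are joined by a chain of at most $N$ overlapping such half-balls (with $N$ depending only on $n$), and iterating the inequality along the chain yields the claim with $C=C_0^N$. The main obstacle — and really the only nontrivial point — is the uniformity in $r$ of the coefficient estimate; this is exactly what Proposition \ref{prop:up bd} is for, and once one observes that on the fixed annulus $\{1/4\le|x|\le4\}$ the weight $|x|^{-2\sigma}$ is harmless, everything else is the routine Harnack chaining and the scale invariance of \eqref{eq:ex0}.
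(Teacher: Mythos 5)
Your proposal is correct and takes essentially the same route as the paper: rescale by $V(X)=r^{\frac{n-2\sigma}{2}}U(rX)$, use Proposition \ref{prop:up bd} (together with the continuity of $u$ away from the origin) to bound the trace, and hence the coefficient $a_r=u_r^{\frac{4\sigma}{n-2\sigma}}$, uniformly in $r$ on the fixed annulus, then conclude by combining Proposition \ref{prop:harnack} near $\{t=0\}$ with the interior Harnack inequality for the (there uniformly elliptic) equation. The only difference is that you spell out the covering and chaining argument that the paper leaves implicit.
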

\begin{proof}
Let
\[
V(X)=r^{\frac{n-2\sigma}{2}}U(rX).
\]
It follows from Proposition \ref{prop:up bd} that
\[
|V(x,0)|\le C\quad\mbox{for all }1/4\le |x|\le 4,
\]
where $C$ is a positive constant depending on $U$ but independent of $r$. Moreover, $V$ satisfies \eqref{eq:ex0} as well. By the Harnack inequality in Proposition \ref{prop:harnack} and the standard Harnack inequality for uniformly elliptic equations, we have
\[
\sup_{1/2\le|X|\le 2} V(X)\le C \inf_{1/2\le|X|\le 2} V(X),
\]
where $C$ is another positive constant independent of $r$. Hence, \eqref{eq:spherical harnack} follows.
\end{proof}

By the Harnack inequality \eqref{eq:spherical harnack} we actually have
\be\label{eq:infinity}
\liminf_{|(x,t)|\to 0}U(x,t)=\infty
\ee
if $0$ is a non-removable singularity of $U$. We know that there exists a sequence of points $\{x_j\}$ such that 
\[
r_j=|x_j|\to 0\quad \mbox{and}\quad U(x_j, 0)\to\infty\quad\mbox{as }j\to\infty.
\]
It follows from \eqref{eq:spherical harnack} that
\[
\inf_{|X|=r_j} U(X)\ge C^{-1}  U(x_j,0).
\]
By the maximum principle,
\[
\inf_{r_{j+1}\le |\xi|\le r_j} U(\xi)= \inf_{|\xi|=r_j,\ r_{j+1}} U(\xi) \ge C^{-1} \min(U(x_j,0), U(x_{j+1},0))\to\infty
\quad\mbox{as }j\to\infty.
\]
The claim is proved.

To prove the lower bound in \eqref{eq:low and up bound}, we will make use of a Pohozaev identity. For a nonnegative solution $U$ of \eqref{eq:ex0}, we define the Pohozaev integral as
\[
\begin{split}
P(U,R)&=\frac{n-2\sigma}{2}\int_{\pa'' \B_{R}}t^{1-2\sigma}\frac{\pa U}{\pa\nu} U
-\frac{R}{2}\int_{\pa'' \B_{R}}t^{1-2\sigma}|\nabla U|^2\\
&\quad+R\int_{\pa'' \B_{R}}t^{1-2\sigma}|\frac{\pa U}{\pa\nu}|^2
+\frac{n-2\sigma}{2n}R\int_{\pa B_{R}}u^{\frac{2n}{n-2\sigma}},
\end{split}
\]
where $u(\cdot)=U(\cdot, 0)$. By the Pohozaev identity (see, e.g., the proof of Proposition 4.3 in \cite{JLX}), $P(U,R)$ is independent of $R$, and we denote it as $P(U)$.

\begin{prop} \label{prop:low bd}
Let $U$ be a nonnegative solution of \eqref{eq:ex0}. 
If 
\[
\liminf_{|x|\to 0}|x|^{\frac{n-2\sigma}{2}}u(x)=0,
\]
then
\[
\lim_{|x|\to 0}|x|^{\frac{n-2\sigma}{2}}u(x)=0.
\]
\end{prop}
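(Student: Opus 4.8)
The plan is to argue by contradiction: suppose $\liminf_{|x|\to 0}|x|^{\frac{n-2\sigma}{2}}u(x)=0$ but the full limit is not $0$. Then there is a sequence $y_j\to 0$ with $|y_j|^{\frac{n-2\sigma}{2}}u(y_j)\ge c>0$, while along another sequence the quantity tends to $0$. I would first recast things in terms of the spherical average or the spherical infimum/supremum: set $m(r)=\inf_{|x|=r}u(x)$ and $M(r)=\sup_{|x|=r}u(x)$. By Lemma \ref{lem:spherical harnack}, $M(r)\le C m(r)$, so the hypothesis $\liminf |x|^{\frac{n-2\sigma}{2}}u(x)=0$ is equivalent to $\liminf_{r\to0} r^{\frac{n-2\sigma}{2}}M(r)=0$, and the negation of the conclusion gives a sequence $r_k\to 0$ with $r_k^{\frac{n-2\sigma}{2}}M(r_k)\ge c>0$. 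The aim is to derive a contradiction by showing that $P(U)$, the $R$-independent Pohozaev integral, must take two incompatible values (or that $P(U)=0$ forces decay).

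The key mechanism is a rescaling argument combined with the sign/vanishing of $P(U)$. Along a sequence $R_k\to0$ on which $R_k^{\frac{n-2\sigma}{2}}M(R_k)\to 0$, I would consider the rescaled solutions $V_k(X)=R_k^{\frac{n-2\sigma}{2}}U(R_kX)$, which solve \eqref{eq:ex0} in an annular region and satisfy $V_k\to 0$ uniformly on, say, $\overline{\B^+_4}\setminus\B^+_{1/4}$ by the spherical Harnack inequality. Since $P(U)=P(V_k)$ is scale invariant (the Pohozaev integral $P(U,R)$ evaluated for $V_k$ at radius $1$ equals $P(U, R_k)$), and all four terms of $P(V_k,1)$ are controlled by $\|V_k\|_{C^1}$ on the sphere $\pa''\B_1$ (here one also needs a gradient estimate: interior elliptic estimates for the degenerate equation away from $\{t=0\}$, plus the boundary $C^{2,\al}$ estimates from \cite{JLX} near $\{t=0\}$, applied to $V_k$ on a fixed annulus where $V_k\to 0$ in $C^0$, hence in $C^1_{loc}$), we conclude $P(U)=\lim_k P(V_k,1)=0$. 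Thus the hypothesis forces $P(U)=0$.

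Next I would show that $P(U)=0$ together with the upper bound \eqref{eq:clc} forces $\lim_{|x|\to0}|x|^{\frac{n-2\sigma}{2}}u(x)=0$, completing the contradiction (since we assumed this limit fails). For this, I would take \emph{any} sequence $\rho_k\to 0$ and rescale as above to get $V_k(X)=\rho_k^{\frac{n-2\sigma}{2}}U(\rho_kX)$; by Proposition \ref{prop:up bd}, $V_k$ is uniformly bounded on $\overline{\B^+_4}\setminus\B^+_{1/4}$, so by the regularity estimates in \cite{JLX} (and standard interior estimates for $t>0$) a subsequence converges in $C^1_{loc}$ of the annulus to a nonnegative solution $V_\infty$ of \eqref{eq:ex0} on a punctured neighborhood, with $P(V_\infty)=P(U)=0$. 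The point is that a nonnegative solution of the degenerate equation on $\R^{n+1}_+\setminus\{0\}$ (or on an annulus) which is homogeneous of degree $-\frac{n-2\sigma}{2}$, i.e. of the form $|X|^{-\frac{n-2\sigma}{2}}\psi(X/|X|)$, has $P=0$ precisely when $\psi$ is the constant giving the singular solution $|x|^{-\frac{n-2\sigma}{2}}$ up to scale — and $P(V_\infty)=0$ should force $V_\infty$ to be a nonzero such solution \emph{or} to vanish. One then rules out the nonzero homogeneous solution: if along some $\rho_k$ the limit were the nonzero singular solution, its value on $\pa''\B_1$ is a fixed positive constant $c_0>0$, whereas on the sequence $R_k$ constructed above the rescalings tend to $0$; running the spherical Harnack inequality between the scales $R_k$ and $\rho_k$ (as in the derivation of \eqref{eq:infinity}) shows $\rho_k^{\frac{n-2\sigma}{2}}M(\rho_k)$ and $R_k^{\frac{n-2\sigma}{2}}m(R_k)$ are comparable up to a fixed constant along the nested scales, which is incompatible with one tending to $0$ and the other bounded below. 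Hence every rescaled limit is $0$, which means $\rho_k^{\frac{n-2\sigma}{2}}M(\rho_k)\to 0$ for every sequence, i.e. $\lim_{|x|\to0}|x|^{\frac{n-2\sigma}{2}}u(x)=0$.

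The main obstacle I anticipate is the middle step: turning $P(U)=0$ into genuine decay, which really requires understanding the blow-down limits $V_\infty$ of $U$ at the singularity. One needs (a) enough compactness to extract a limit $V_\infty$ that is a bona fide global (or annular) solution — this uses the uniform bound from Proposition \ref{prop:up bd} and the regularity theory of \cite{JLX}, so it is available; and (b) a classification or at least a rigidity statement saying that a nonnegative solution with $P=0$ on $\R^{n+1}_+\setminus\{0\}$, bounded by $C|X|^{-\frac{n-2\sigma}{2}}$, must be either identically $0$ on the relevant scale or the explicit singular solution. Part (b) is where the degeneracy of the equation bites: in the case $\sigma=1$ one has ODE arguments for the spherical average, which are unavailable here, so instead I would lean on the conformal/Kelvin invariance (the equation is invariant under $X\mapsto X/|X|^2$ up to the weight $|X|^{-(n-2\sigma)}$) to relate the behavior near $0$ to the behavior near $\infty$ and then invoke the monotonicity coming from the moving-sphere inequality \eqref{eq:aim1}-type argument already used in Proposition \ref{prop:up bd}; the delicate point is making the moving-sphere argument work with only the weak estimates inherited from the trace $u$. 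If a full classification is out of reach at this stage, the fallback is to prove only the dichotomy "either the rescaled limit is $0$ or $P(U)\neq 0$," which is exactly what is needed here and avoids classifying all solutions.
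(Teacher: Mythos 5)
Your first step --- rescaling along a sequence of scales where $r^{\frac{n-2\sigma}{2}}\sup_{|x|=r}u\to 0$, using the spherical Harnack inequality and the regularity theory of \cite{JLX} to get $C^1$ convergence to zero on a fixed annulus, and concluding $P(U)=\lim_k P(U,R_k)=0$ --- is correct and is exactly the first half of the paper's argument. The genuine gap is in your second step, where you try to convert $P(U)=0$ into decay by classifying blow-down limits $V_\infty$ of the \emph{nonlinearly} normalized rescalings $\rho_k^{\frac{n-2\sigma}{2}}U(\rho_k X)$. The rigidity statement you lean on ("a nonnegative solution with $P=0$, bounded by $C|X|^{-\frac{n-2\sigma}{2}}$, is either zero or the explicit singular solution") is precisely the classification of global singular solutions of \eqref{eq:ex01}, which the paper explicitly states is \emph{not} available for fractional $\sigma$ (the ODE analysis of the spherical average, used when $\sigma=1$, is missing and is deferred to future work). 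Moreover the statement is confused as formulated: the homogeneous singular solution $c_0|x|^{-\frac{n-2\sigma}{2}}$ has $P\neq 0$ (it is the solutions with removable singularity that have $P=0$), and your $V_\infty$ need not be homogeneous in any case. Your "fallback" dichotomy ("either the rescaled limit is $0$ or $P(U)\neq0$") is exactly the content of the proposition and is asserted, not proved.

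The paper sidesteps the classification entirely by two devices. First, it chooses the blow-down scales $r_i$ to be \emph{local minima} of $r^{\frac{n-2\sigma}{2}}\bar u(r)$ with $r_i^{\frac{n-2\sigma}{2}}\bar u(r_i)\to 0$ (such scales exist because the $\liminf$ is $0$ while, by assumption, the $\limsup$ is positive). Second, it normalizes by the \emph{value} $U(r_ie_1)$ rather than by $r_i^{-\frac{n-2\sigma}{2}}$: since $r_i^{\frac{n-2\sigma}{2}}U(r_ie_1)\to0$, the coefficient $(r_i^{\frac{n-2\sigma}{2}}U(r_ie_1))^{\frac{4\sigma}{n-2\sigma}}$ in the rescaled Neumann condition tends to zero, so the limit $W$ solves the \emph{linear} problem $\mathrm{div}(t^{1-2\sigma}\nabla W)=0$ with $\frac{\pa W}{\pa\nu^\sigma}=0$ on $\R^n\setminus\{0\}$. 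For that linear problem a B\^ocher-type theorem (Lemma 4.4 in \cite{JLX}) is available and gives $W=a|X|^{2\sigma-n}+b$; the local-minimum condition forces the critical point of $r^{\frac{n-2\sigma}{2}}\bar w(r)$ at $r=1$, hence $a=b=\tfrac12$. Evaluating the Pohozaev integral on this explicit limit yields $-\frac{(n-2\sigma)^2}{8}\int_{\pa''\B_1}t^{1-2\sigma}\neq 0$, contradicting $P(U)=0$. If you want to repair your write-up, you should replace your step (b) by this linear-limit argument; as it stands, the proposal does not close.
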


\begin{proof}

We suppose by contradiction that
\[
\liminf_{|x|\to 0}|x|^{\frac{n-2\sigma}{2}}u(x)=0\quad\mbox{and}\quad\limsup_{|x|\to 0}|x|^{\frac{n-2\sigma}{2}}u(x)=C>0.
\]
Hence, there exist two sequences of points $\{x_i\}, \{y_i\}$ satisfying
\[
x_i\to 0,\quad y_i\to 0\quad \mbox{as }i\to\infty,
\]
such that
\[
|x_i|^{\frac{n-2\sigma}{2}}u(x_i)\to 0\quad\mbox{and}\quad |y_i|^{\frac{n-2\sigma}{2}}u(y_i)\to C>0\quad \mbox{as }i\to\infty.
\]
Then there exists a sequence of positive numbers $\{r_i\}$ converging to $0$ such that
\[
r_i^{\frac{n-2\sigma}{2}}\bar u(r_i)\to 0\quad\mbox{as }i\to\infty\quad\mbox{and}\quad r_i\mbox{ are local minimum of }r^{\frac{n-2\sigma}{2}}\bar u(r)\quad\mbox{for every }i,
\]
where $\bar u(r)$ is the spherical average of $u$ on $\pa B_r$.
Let 
\[
W_i(X)=\frac{U(r_iX)}{U(r_ie_1)},
\]
where $e_1=(1,0,\cdots, 0)$. Then $W_i(X)$ is locally uniformly bounded away from the origin, which follows from the Harnack inequality \eqref{eq:spherical harnack}, and satisfies
\[
\begin{cases}
\begin{aligned}
\mathrm{div}(t^{1-2\sigma} \nabla W_i)&=0 & \quad &\mbox{in }\R^{n+1}_+,\\
\frac{\pa W_i}{\pa \nu^\sigma} (x,0)&=( r_i^{\frac{n-2\sigma}{2}}U(r_ie_1))^\frac{4\sigma}{n-2\sigma}W_i^{\frac{n+2\sigma}{n-2\sigma}}&\quad &\mbox{on }\R^n\setminus\{0\}.
\end{aligned}
\end{cases}
\]
Notice that by the Harnack inequality \eqref{eq:spherical harnack}, $r_i^{\frac{n-2\sigma}{2}}U(r_ie_1)\to 0$ as $i\to\infty$. By Corollary 2.1 in \cite{JLX} and Theorem 2.7 in \cite{JLX} there exists some $\al>0$ such that for every $R>1>r>0$,
\[
\|W_j\|_{W^{1,2}(t^{1-2\sigma},\B^+_R\setminus\overline \B^+_r)}+\|W_j\|_{C^\al( \B^+_R\setminus\overline \B^+_r)}+\|w_j\|_{C^{2,\al}( B_R\setminus\overline \B^+_r)}\le C(R,r),
\]
where $C(R,r)$ is independent of $i$. Then up to a subsequence, $\{W_i\}$ converges to a nonnegative function $W\in W^{1,2}_{loc}(t^{1-2\sigma},\overline{\mathbb{R}^{n+1}_+}\setminus\{0\})\cap C^{\al}_{loc}(\overline{\mathbb{R}^{n+1}_+}\setminus\{0\})$ satisfying
\[
\begin{cases}
\begin{aligned}
\mathrm{div}(t^{1-2\sigma} \nabla W)&=0 & \quad &\mbox{in }\R^{n+1}_+,\\
\frac{\pa W}{\pa \nu^\sigma} (x,0)&= 0&\quad &\mbox{on }\R^n\setminus\{0\}.
\end{aligned}
\end{cases}
\]
By a B\^ocher type theorem in Lemma 4.4 in \cite{JLX}, we have
\[
W(X)=\frac{a}{|X|^{n-2\sigma}}+b,
\]
where $a,b$ are nonnegative constants. Let $w(x)=W(x,0)$. We know that $w_i(x)\to w(x)$ in $C^2_{loc}(\R^n\setminus\{0\})$. Thus, $r^{\frac{n-2\sigma}{2}}\bar w(r)$ has a critical point at $r=1$, which implies that $a=b$. Since $W(e_1)=1$, we have $a=b=1/2$. Now let us compute $P(U)$.

It follows from Proposition 2.6 in \cite{JLX} that $|\nabla_x W_i|$ and $|t^{1-2\sigma}\pa_t W_i|$ are locally uniformly bounded in $C_{loc}^{\gamma}(\overline{\R^{n+1}_+}\setminus\{0\})$ for some $\gamma>0$. We have, for some $C>0$,
\[
|\nabla_x U(X)|\le C r_i^{-1} U(r_ie_1)=o(1)r_i^{-\frac{n-2\sigma}{2}-1}\quad\mbox{for all }|X|=r_i
\]
and
\[
|t^{1-2\sigma} U_t(X)|\le C r_i^{-2\sigma} U(r_ie_1)=o(1)r_i^{-\frac{n-2\sigma}{2}-2\sigma}\quad\mbox{for all }|X|=r_i.
\]
Thus
\[
P(U)=\lim_{i\to\infty} P(U,r_i)=0.
\]
Hence
\[
P(U,r_i)=0\quad\mbox{for all }i.
\]

On the other hand, for all $i$, 
\[
0=P(U,r_i)=P(r_i^{\frac{n-2\sigma}{2}}U(r_iX), 1)=P(r_i^{\frac{n-2\sigma}{2}}U(r_ie_1)W_i,1).
\]
Hence, we have
\[
\begin{split}
0&=\frac{n-2\sigma}{2}\int_{\pa'' \B_{1}}t^{1-2\sigma}\frac{\pa W_i}{\pa\nu} W_i
-\frac{1}{2}\int_{\pa'' \B_{1}}t^{1-2\sigma}|\nabla W_i|^2\\
&\quad+\int_{\pa'' \B_{1}}t^{1-2\sigma}|\frac{\pa W_i}{\pa\nu}|^2
+\frac{n-2\sigma}{2n}\int_{\pa B_{1}}(r_i^{\frac{n-2\sigma}{2}}U(r_ie_1))^{\frac{4\sigma}{n-2\sigma}}W_i^{\frac{2n}{n-2\sigma}}.
\end{split}
\]
Sending $i\to\infty$, we have
\[
\begin{split}
0&=\frac{n-2\sigma}{2}\int_{\pa'' \B_{1}}t^{1-2\sigma}\frac{\pa W}{\pa\nu} W
-\frac{1}{2}\int_{\pa'' \B_{1}}t^{1-2\sigma}|\nabla W|^2+\int_{\pa'' \B_{1}}t^{1-2\sigma}|\frac{\pa W}{\pa\nu}|^2\\
&=-\frac{(n-2\sigma)^2}{8}\int_{\pa'' \B_{1}}t^{1-2\sigma},
\end{split}
\]
which is a contradiction.
\end{proof}

\begin{prop}\label{prop:continuous}
Let $U$ be a nonnegative solution of \eqref{eq:ex0}. 
If 
\[
\lim_{|x|\to 0}|x|^{\frac{n-2\sigma}{2}}u(x)=0,
\]
then $u$ can be extended as a continuous function near the origin $0$.
\end{prop}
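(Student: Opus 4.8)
The plan is to show that the singularity at the origin is removable by establishing that $u$, together with its extension $U$, stays bounded near $0$, and then invoking the removable-singularity / regularity theory already available in \cite{JLX}. The hypothesis $\lim_{|x|\to 0}|x|^{\frac{n-2\sigma}{2}}u(x)=0$ says that $u(x)=o(|x|^{-\frac{n-2\sigma}{2}})$, which in particular makes the Neumann nonlinearity subcritical in an averaged sense near $0$: writing the boundary condition as $\frac{\pa U}{\pa\nu^\sigma}=a(x)u$ with $a(x)=u(x)^{\frac{4\sigma}{n-2\sigma}}=o(|x|^{-2\sigma})$, one checks that $a\in L^p_{loc}$ near the origin for some $p>\frac{n}{2\sigma}$ is not quite automatic, so the first real step is a bootstrap: use the smallness $|x|^{\frac{n-2\sigma}{2}}u(x)\to 0$ to rescale and compare with solutions of the \emph{linear} Neumann problem $\mathrm{div}(t^{1-2\sigma}\nabla W)=0$, $\frac{\pa W}{\pa\nu^\sigma}=0$.

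Concretely, I would argue by a blow-up/rescaling contradiction analogous to the one in Proposition \ref{prop:low bd}. Suppose $u$ is \emph{not} bounded near $0$; then by \eqref{eq:infinity} and Lemma \ref{lem:spherical harnack} we have $\liminf_{|X|\to 0}U(X)=\infty$, so we may pick $r_i\to 0$ and set $W_i(X)=U(r_iX)/U(r_ie_1)$. By \eqref{eq:spherical harnack} the $W_i$ are locally uniformly bounded in $\R^{n+1}_+\setminus\{0\}$, and the rescaled boundary coefficient is $(r_i^{\frac{n-2\sigma}{2}}U(r_ie_1))^{\frac{4\sigma}{n-2\sigma}}W_i^{\frac{4\sigma}{n-2\sigma}}$; since the hypothesis forces $r_i^{\frac{n-2\sigma}{2}}U(r_ie_1)\to 0$ (using Lemma \ref{lem:spherical harnack} to pass from $u$ to its average to $U(r_ie_1)$), the nonlinearity drops out in the limit. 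The compactness estimates from \cite{JLX} (Corollary 2.1, Theorem 2.7) give $W_i\to W$ in $C^{\al}_{loc}(\overline{\R^{n+1}_+}\setminus\{0\})$ and in $W^{1,2}_{loc}(t^{1-2\sigma})$ away from $0$, with $W\ge 0$ a solution of $\mathrm{div}(t^{1-2\sigma}\nabla W)=0$ in $\R^{n+1}_+$, $\frac{\pa W}{\pa\nu^\sigma}=0$ on $\R^n\setminus\{0\}$, and $W(e_1)=1$. The B\^ocher-type theorem (Lemma 4.4 in \cite{JLX}) then yields $W(X)=a|X|^{-(n-2\sigma)}+b$ with $a,b\ge 0$. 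Running the Pohozaev identity exactly as in the proof of Proposition \ref{prop:low bd} (the nonlinear boundary term vanishes in the limit because $r_i^{\frac{n-2\sigma}{2}}U(r_ie_1)\to 0$), we get $P(U)=0$ and hence, in the limit, the relation $-\frac{(n-2\sigma)^2}{8}ab\int_{\pa''\B_1}t^{1-2\sigma}=0$ for a suitable normalization, forcing $a=0$ or $b=0$; combined with a separate argument that rules out the pure-singular profile (if $a>0,b=0$ then $r^{\frac{n-2\sigma}{2}}\bar w(r)$ is monotone, contradicting that $r_i$ were chosen, say, as local minima of $r^{\frac{n-2\sigma}{2}}\bar u(r)$ tending to a limit — or more simply, $a>0$ contradicts $U$ being bounded in the noncompact direction after Kelvin transform). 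Either way $W\equiv b$, so $U(r_iX)/U(r_ie_1)\to$ const uniformly on $\pa'\B_1\setminus\{0\}$, contradicting $\liminf_{|X|\to 0}U=\infty$ since $U(r_ie_1)$ itself is then comparable to $\inf_{|X|=r_i}U\to\infty$ while the limit is a finite constant profile — the precise contradiction is that a nonconstant harmonic-type profile is needed to have both $U\to\infty$ and the equation hold, which $W\equiv b$ cannot supply.

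Once $u$ is bounded near $0$, the extension $U$ is bounded on $\B_{1/2}^+$ by Proposition \ref{prop:harnack} applied on dyadic annuli (the coefficient $a=u^{\frac{4\sigma}{n-2\sigma}}$ is now in $L^\infty_{loc}\subset L^p$ for every $p$), and a capacity argument shows the punctured-ball weak solution is in fact a weak solution across $0$: the single point $\{0\}$ has zero $W^{1,2}(t^{1-2\sigma})$-capacity in $\R^{n+1}$, so a bounded $U\in W^{1,2}(t^{1-2\sigma},\B_1^+\setminus\overline\B_\va^+)$ for all $\va$ which is bounded near $0$ extends to $U\in W^{1,2}(t^{1-2\sigma},\B_1^+)$ satisfying \eqref{eq:ex0} in $\B_1^+$ with the Neumann condition on all of $\pa'\B_1$. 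Then the regularity theory of \cite{JLX} gives $U\in C^\al(\overline\B_{1/2}^+)$, so $u=U(\cdot,0)$ is continuous near the origin. The main obstacle I anticipate is the sharpness of the contradiction step: extracting from $r_i^{\frac{n-2\sigma}{2}}U(r_ie_1)\to 0$ and the Pohozaev identity a clean incompatibility with $\liminf_{|X|\to0}U=\infty$, i.e. correctly ruling out the $a|X|^{-(n-2\sigma)}$ term while keeping track that the normalization $W(e_1)=1$ is preserved under the limit; this is precisely the place where one must argue that the rescaled sequence cannot concentrate into a nontrivial singular harmonic profile, and it will likely require choosing the $r_i$ along the sequence where $|x|^{\frac{n-2\sigma}{2}}\bar u$ attains suitable critical values, mirroring the device used in Proposition \ref{prop:low bd}.
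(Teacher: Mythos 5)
Your proposal diverges substantially from the paper's proof, and the step you yourself flag as ``the main obstacle'' is a genuine gap that your strategy cannot close as written. The blow-up $W_i(X)=U(r_iX)/U(r_ie_1)$ with the B\^ocher theorem and the Pohozaev identity yields a limit $W=a|X|^{2\sigma-n}+b$ with $a+b=1$ (normalization) and $ab=0$ (Pohozaev, since $r^{\frac{n-2\sigma}{2}}U(re_1)\to 0$ forces $P(U)=0$). But neither surviving alternative contradicts unboundedness of $u$: the normalization by $U(r_ie_1)$ washes out the actual growth rate, so $W\equiv 1$ is perfectly consistent with $U(r_ie_1)\to\infty$ (it only says $U$ is asymptotically constant on each dyadic annulus, which the Harnack inequality already gives), and $W=|X|^{2\sigma-n}$ is not obviously excluded either. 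The device of Proposition \ref{prop:low bd} --- choosing $r_i$ at critical points of $r^{\frac{n-2\sigma}{2}}\bar u(r)$ to force $a=b$ --- is also unavailable in a useful form here: combined with $ab=0$ and $a+b=1$ it would produce a contradiction \emph{independently of whether $u$ is bounded}, so it can only show that such critical points eventually fail to exist, not that $u$ is bounded. In short, annulus-by-annulus blow-up information does not propagate across scales to control $\sup u$, and no contradiction with $\liminf_{|X|\to 0}U=\infty$ is actually reached.

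The paper takes an entirely different and more elementary route that sidesteps boundedness altogether. It builds explicit supersolutions $\Phi_\mu(X)=|X|^{-\mu}-\delta t^{2\sigma}|X|^{-(\mu+2\sigma)}$, which satisfy $\mathrm{div}(t^{1-2\sigma}\nabla\Phi_\mu)\le 0$ and $\frac{\pa\Phi_\mu}{\pa\nu^\sigma}=2\delta\sigma|x|^{-2\sigma}\Phi_\mu(x,0)$; since the hypothesis gives $u^{\frac{4\sigma}{n-2\sigma}}=o(|x|^{-2\sigma})\le 2\delta\sigma|x|^{-2\sigma}$ near $0$, a comparison with $C\Phi_\al+\va\Phi_\beta$ ($\al\in(0,\frac{n-2\sigma}{2})$ arbitrary, $\beta>\frac{n-2\sigma}{2}$ handling the behavior at the origin, then $\va\to 0$) yields $U\le C(\al)|X|^{-\al}$. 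Taking $\al$ small enough, this puts $U$ in $W^{1,2}(t^{1-2\sigma},\B_\tau^+)$ and $u$ in $L^p$ for some $p>\frac{n}{2\sigma}$; a cutoff argument then shows the equation holds across the origin, and the linear regularity theory of \cite{JLX} delivers H\"older continuity. Your final paragraph (capacity removability plus regularity) is in the right spirit and would work \emph{if} you had a suitable polynomial upper bound, but note that the paper never proves $u$ is bounded as an intermediate step --- boundedness is an output of the regularity theory, not an input. If you want to keep a blow-up flavor, you would still need some mechanism (a barrier, or an ODE-type comparison of $\bar u$ across scales) to convert the hypothesis into a quantitative rate $u=O(|x|^{-\al})$ with $\al<\frac{n-2\sigma}{2}$; the Pohozaev identity alone does not supply it.
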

\begin{proof}
By the Harnack inequality \eqref{eq:spherical harnack}, we have $ \lim_{|X|\to 0}|X|^{\frac{n-2\sigma}{2}}U(X)=0$. For $0<\mu\le n-2\sigma$ and $\delta>0$, let
\[
\Phi_\mu(X):=|X|^{-\mu}-\delta t^{2\sigma}|X|^{-(\mu+2\sigma)}.
\]
Then we have
\[
\mathrm{div}(t^{1-2\sigma}\nabla \Phi_\mu(X))=t^{1-2\sigma}|X|^{-(\mu+2)}\left(-\mu(n-2\sigma-\mu)+\frac{\delta(\mu+2\sigma)(n-\mu)t^{2\sigma}}{|X|^{2\sigma}}\right),
\]
and
\[
-\dlim_{t\rightarrow 0^+}t^{1-2\sigma}\pa_t \Phi_\mu(x,s)=2\delta\sigma|x|^{-(\mu+2\sigma)}= 2\delta\sigma|x|^{-2\sigma}\Phi_{\mu}(x,0).
\]
Let $\alpha\in (0,\frac{n-2\sigma}{2})$ be fixed, $\beta=\frac{n-2\sigma}{2}+1$ and $\Phi=C\Phi_\alpha+\va\Phi_\beta$, where $C, \va$ are positive constants. We can choose $\delta$ small (depending on $\al$) such that
\[
\begin{cases}
\begin{aligned}
\mathrm{div}(t^{1-2\sigma} \nabla \Phi)&\le 0 & \quad &\mbox{in }\B^+_2,\\
\frac{\pa \Phi}{\pa \nu^\sigma} (x,0)&= 2\delta\sigma|x|^{-2\sigma}\Phi(x,0)&\quad &\mbox{on }B_2\setminus\{0\}.
\end{aligned}
\end{cases}
\]
Let $\tau$ be such that $a(x)=u^{\frac{4\sigma}{n-2\sigma}}\le 2\delta\sigma|x|^{-2\sigma}$ for all $0<|x|<\tau$. Then we have
\[
\begin{cases}
\begin{aligned}
\mathrm{div}(t^{1-2\sigma} \nabla (\Phi-U))&\le 0 & \quad &\mbox{in }\B^+_\tau,\\
\frac{\pa (\Phi-U)}{\pa \nu^\sigma} (x,0)&\ge 2\delta\sigma|x|^{-2\sigma}(\Phi(x,0)-U(x,0))&\quad &\mbox{on }B_\tau\setminus\{0\}.
\end{aligned}
\end{cases}
\]
For every $\va>0$, we have that $\Phi\ge U$ near $0$. We can choose $C$ (depending on $\al$) sufficiently large so that $\Phi\ge U$ on $\pa'' \B_\tau$. Hence, by the maximum principle in Lemma A.1 in \cite{JLX} (we can choose $\delta$ even smaller if needed), we have
\[
\Phi\ge U \quad\mbox{in } \B_\tau^+\setminus\{0\}.
\]
By sending $\va\to 0$, we have
\[
 U\le  C(\al)\Phi_\alpha\le C(\al)|X|^{-\al}\quad\mbox{in } \B_\tau^+\setminus\{0\}.
\]
It follows from standard rescaling arguments, with the help of Proposition 2.6 in \cite{JLX} and standard uniform elliptic equations theory, that
\[
 |\nabla_x U(X)|\le C(\al)|X|^{-\al-1}\quad\mbox{in } \B_\tau^+\setminus\{0\}
\]
and
\[
 |t^{1-2\sigma}\pa_t U(X)|\le C(\al)|X|^{-\al-2\sigma}\quad\mbox{in } \B_\tau^+\setminus\{0\}.
\]
Since $\al<\frac{n-2\sigma}{2}$, it is elementary to verify that $U\in W^{1,2}(t^{1-2\sigma},\B^+_\tau)$. Moreover, $U$ satisfies
\[
\begin{cases}
\begin{aligned}
\mathrm{div}(t^{1-2\sigma} \nabla U)&=0 & \quad &\mbox{in }\B_\tau^+,\\
\frac{\pa U}{\pa \nu^\sigma} (x,0)&= U^{\frac{n+2\sigma}{n-2\sigma}}(x,0) &\quad& \mbox{on }\pa' \B_\tau.
\end{aligned}
\end{cases}
\]
Indeed, for $\va>0$ small, let $\eta_\va$ be a smooth cut-off function satisfying $\eta\equiv 0$ in $\B_\va$, $\eta\equiv 1$ outside of $\B_{2\va}$ and $\nabla \eta_\va\le C\va^{-1}$. Let $\varphi\in C_c^{\infty}(\B_\tau^+\cup\pa'\B_\tau^+)$. It follows that
\[
\int_{\B_\tau^+} t^{1-2\sigma}\nabla U\nabla (\varphi\eta_\va)=\int_{\pa'\B_\tau^+} U^{\frac{n+2\sigma}{n-2\sigma}}(x,0)\varphi\eta_\va.
\]
By the dominated convergence theorem and sending $\va\to 0$, we have
\[
\int_{\B_\tau^+} t^{1-2\sigma}\nabla U\nabla \varphi=\int_{\pa'\B_\tau^+} U^{\frac{n+2\sigma}{n-2\sigma}}(x,0)\varphi.
\]

Finally, since $U(\cdot,0)\in L^p(B_1)$ for some $p>\frac{n}{2\sigma}$, it follows from Proposition 2.4 in \cite{JLX} that $U$ is H\"older continuous in $\overline \B_{\tau/2}^+$.
\end{proof}

\begin{proof}[Proof of Theorem \ref{thm:a}]
It follows from Propositions \ref{prop:up bd}, \ref{prop:low bd} and \ref{prop:continuous}.
\end{proof}

\section{Global solutions with an isolated singularity}\label{sec:global}

\begin{proof}[Proof of Theorem \ref{thm:gs}]
It follows from Proposition \ref{prop:liminf} that
\[
\liminf_{|\xi|\to 0}U(\xi)>0.
\]
First, we would like to show that for all $x\in\R^n\setminus\{0\}$ there exists $\lda_3(x)\in (0,|x|)$ such that for all $0<\lda<\lda_3(x)$ we have
\be\label{eq:bigger}
U_{X,\lda}(\xi)\le U(\xi)\quad\forall\ |\xi-X|\ge \lda,~\xi\neq 0,
\ee
where $X=(x,0)$ and
\[
U_{X,\lda }(Y):= \left(\frac{\lda }{|Y-X|}\right)^{n-2\sigma}U\left(X+\frac{\lda^2(Y-X)}{|Y-X|^2}\right).
\]
This can be proved similarly to that for $W_j$ in the proof of Theorem \ref{thm:a}, and we sketch the proofs here.
The first step is to show that there exist $0<\lda_1<\lda_2<|x|$ such that
\[
U_{X,\lda }(\xi)\leq U(\xi), ~\forall~0<\lda<\lda_1,~\lda<|\xi-X|<\lda_2.
\]
The proof of this step follows exactly the same as that for $W_j$ before. The second step is to show that there exists $\lda_3(x)\in (0,|x|)$ such that \eqref{eq:bigger} holds for all $0<\lda<\lda_3(x)$. To prove this step, we only need to make sure that \eqref{eq:cl21} holds for $U$, i.e.,
\be\label{eq:cl22U}
U(\xi)\geq \left(\frac{\lda_2}{|\xi-X|}\right)^{n-2\sigma}\inf_{\partial'' \B_{\lda_2}(X)} U,~\forall~|\xi-X|>\lda_2,~\xi\neq 0,
\ee
where $\lda_2<|x|$ is small. And \eqref{eq:cl22U} follows from a standard maximum principle argument.

Now, we can define
\[
\bar \lda(x):=\sup \{0<\mu\le |x|\ |\ U_{X,\lda}(\xi)\leq U(\xi),\quad \forall~|\xi-X|\geq \lda, ~\xi\neq 0,~\forall~ 0<\lda <\mu\}.
\]
Secondly, we will show that
\be\label{eq:gseq}
\bar \lda(x)=|x|.
\ee
Suppose $\bar \lda(x)<|x|$ for some $x\neq 0$. Since $0$ is not removable, by strong maximum principle we have $U(\xi)>U_{X,\lda}(\xi)$ for $|\xi-X|> \lda, ~\xi\neq 0$. It follows from Proposition \ref{prop:liminf} that
\[
\liminf_{\xi\to 0} (U(\xi)-U_{X,\lda}(\xi))>0.
\]
Then using the narrow domain technique as before (see also the proof of Theorem 1.5 in \cite{JLX}), the moving sphere procedure may continue beyond $\bar \lda(x)$ where we reach a contradiction. This proved \eqref{eq:gseq}. Thus
\be\label{eq:gseq2}
\ U_{X,\lda}(\xi)\leq U(\xi),\quad \forall~|\xi-X|\geq \lda, ~\xi\neq 0,~\forall~ 0<\lda <|x|.
\ee
For any unit vector $e\in\R^n$, for any $a>0$, $\xi=(y,t)\in\overline{\R^{n+1}_+}$ satisfying $(\xi-ae)\cdot e<0$, \eqref{eq:gseq2} holds with $x=Re$ and $\lda=R-a$. Sending $R$ to infinity, we have
\[
U(y,t)\ge U(y-2(y\cdot e-a)e,t).
\]
This shows the radial symmetry and non-increasing property of $u$ in $r$. Since we can differentiate the equation \eqref{eq:ex01} w.r.t. $x$ (see Proposition 2.5 in \cite{JLX}), then by applying the Harnack inequality in Proposition \ref{prop:harnack} to the equation of $U_r$, we have $U_r<0$. Theorem \ref{thm:gs} is proved. 
\end{proof}

\section{Asymptotical radial symmetry}\label{sec:asymptotical}

\begin{proof}[Proof of Theorem \ref{thm:b}]
As before, we have that for all $0<|x|<\frac 14$, $X=(x,0)$,
\[
\bar \lda(x):=\sup \{0<\mu\le |x|\ |\ U_{X,\lda}(\xi)\leq U(\xi),\quad \forall~|\xi-X|\geq \lda, ~0<|\xi|\leq 1,~\forall~ 0<\lda <\mu\}
\]
is well-defined and $\bar \lda(x)>0$, where we denote $\xi=(y,t)$. We are not going to prove this statement, since its proof is very similar to those in the previous two sections. One only need to notice that we can choose $\lda_2$ small such that
\be\label{eq:as}
U(\xi)\geq \left(\frac{\lda_2}{|\xi-X|}\right)^{n-2\sigma}\inf_{\partial'' \B_{\lda_2}(X)} U,\quad\forall~\xi\in\pa''\B^+,
\ee
which implies
\be\label{eq:as2}
U(\xi)\geq \left(\frac{\lda_2}{|\xi-X|}\right)^{n-2\sigma}\inf_{\partial'' \B_{\lda_2}(X)} U,\quad\forall~|\xi-X|>\lda, 0<|\xi|\le 1.
\ee
For $y\in B_2$, $\frac 34\le|y|\le \frac 54$ and $0<\lda<|x|<\frac 14$,
\[
\left|x+\frac{\lda^2(y-x)}{|y-x|^2}-x\right|\le 2\lda^2\le 2|x|^2\le \frac{|x|}{2}.
\]
it follows from Theorem \ref{thm:a} that
\[
u\left(x+\frac{\lda^2(y-x)}{|y-x|^2}\right)\le C|x|^{\frac{2\sigma-n}{2}}.
\]
Thus,
\[
u_{x,\lda}(y)=U_{X,\lda}(y,0)=C\lda^{n-2\sigma}|x|^{\frac{2\sigma-n}{2}}\le C|x|^{\frac{n-2\sigma}{2}}\quad\forall 0<\lda<|x|<\frac 14,\ \frac 34\le|y|\le \frac 54.
\]
By Harnack inequality in Proposition \ref{prop:harnack}, for all $|\xi|=1$, we have
\[
U_{X,\lda}(\xi)\le C|x|^{\frac{n-2\sigma}{2}}<U(\xi)\quad\forall 0<\lda<|x|\le \va/2,\ |\xi|=1
\]
for $\va>0$ sufficiently small. Moreover, it follows from Proposition \ref{prop:liminf} that
\[
\liminf_{\xi\to 0} (U(\xi)-U_{X,\lda}(\xi))>0.
\]
As before, given these two properties with narrow domain techniques, the moving sphere procedure may continue if $\bar\lda(x)< |x|$. Thus we obtain $\bar\lda(x)=|x|$ for $|x|\le\va/2$, where $\va$ is sufficiently small. Thus, we have proved that there exists some constant $\va>0$ such that
\be\label{eq:small stop}
U_{X,\lda}(\xi)\le U(\xi)\quad \forall~0<\lda<|x|\le \va/2,~|\xi-X|\ge\lda,~0<|\xi|\le 1.
\ee
In particular
\be\label{eq:small stop1}
u_{x,\lda}(y)\le u(y)\quad \forall~0<\lda<|x|\le \va/2,~|y-x|\ge\lda,~0<|y|\le 1.
\ee
By Lemma A.2 in \cite{LL} (or more precisely, its proof there), we have
\be\label{eq:ln est}
|\nabla \log u(x)|\le \frac{n-2\sigma}{|x|}\quad\mbox{for all }0<|x|<\frac{\va}{4}. 
\ee
Indeed, for $x\in B_{\va/4}\setminus\{0\}$, let $z=x-se$ where $0<s<\frac{|x|}{2}$ and $e\in\mathbb{S}^n$. It follows from \eqref{eq:small stop1} that
\[
u_{z,s}(y)\le u(y)\quad \forall~|y-z|\ge s,~0<|y|\le 1.
\]
Let $y=z+\tilde s e$ for some $\tilde s>s$ but close to $s$. Then we have
\[
\left(\frac{s^2}{\tilde s}\right)^{\frac{n-2\sigma}{2}}u\left(z+\frac{s^2}{\tilde s}e\right)\le \tilde s ^{\frac{n-2\sigma}{2}}u(z+\tilde se),
\]
which implies that
\[
\left.\frac{\ud }{\ud h}\right|_{h=s}h^{\frac{n-2\sigma}{2}}u(z+he)\ge 0.
\]
Consequently,
\[
-\nabla u(x)\cdot e\le \frac{n-2\sigma}{2s}u(x).
\]
By sending $s\to |x|/2$, we have 
\[
|\nabla u(x)|\le \frac{n-2\sigma}{|x|}u(x),
\]
which proves \eqref{eq:ln est}. Let
\[
V(\xi)=|\xi|^{2\sigma-n}U\left(\frac{\xi}{|\xi|^2}\right)\quad\mbox{and}\quad v(\cdot)=V(\cdot,0).
\]
Then it follows from \eqref{eq:small stop1} that for all $\mu>M:=\frac {1}{\va}$,
\[
v(y)\le v(y_{\mu})\quad\forall \ y\cdot e\ge\mu,~|y_\mu|\ge 1,~e\in\R^n,~|e|=1,
\]
where $y_\mu=y+2(\mu-y\cdot e)e$ is the reflection of $y$ with respect to the plane $x\cdot e=\mu$.
Thus, there exists $c>0$ independent of $M$ such that
\[
v(x)\ge v(y)\quad\mbox{whenever }|x|>1,~|y|\ge |x|+cM.
\]
It follows that for $R$ large
\[
\sup_{|x|=R+cM} v(x)\le \inf_{|x|=R}v(x)\le\sup_{|x|=R}v(x)\le\inf_{|x|=R-cM}v(x).
\]
Since
\[
u(x)=\left(\frac{1}{|x|}\right)^{n-2\sigma}v\left(\frac{x}{|x|^2}\right),
\]
It follows that
\be\label{eq:compare1}
\left(1+O(r)\right)\sup_{|y|=\frac{r}{1+cMr}}u(y)\le u(x)\le \left(1+O(r)\right)\inf_{|y|=\frac{r}{1-cMr}}u(y)\quad\mbox{for all }|x|=r
\ee
if $r$ is sufficiently small. Suppose $\bar y_r$ be such that $|\bar y_r|=\frac{r}{1-cMr}$ and 
\[
u(\bar y_r)=\inf_{|y|=\frac{r}{1-cMr}}u(y).
\]
Let $\bar y'_r=\frac{r}{1+cMr}\cdot \frac{\bar y_r}{|\bar y_r|}$. It follows from \eqref{eq:ln est} that
\[
\log u(\bar y_r)-\log u(\bar y'_r)\le \frac{C}{r}|\bar y_r-\bar y'_r|\le Cr.
\]
Consequently,
\be\label{eq:compare2}
\inf_{|y|=\frac{r}{1-cMr}}u(y)=u(\bar y_r)\le e^{Cr}u(\bar y'_r)\leq (1+O(r))\sup_{|y|=\frac{r}{1+cMr}}u(y).
\ee
By \eqref{eq:compare1} and \eqref{eq:compare2}, we have
\[
u(x)\le (1+O(r))u(x')\quad\mbox{for all }|x|=|x'|=r.
\]
Thus,
\[
u(x)=(1+O(r))\bar u(|x|).
\]
 Theorem \ref{thm:b} is proved. 
\end{proof}

\section{A Harnack inequality}\label{sec:harnack}

The proof of Theorem \ref{thm:c} uses again blow up analysis, which is similar to that of Theorem \ref{thm:a}. However, the blow up solutions in the proof of Theorem \ref{thm:a} come from a single given solution. But here, we have a sequence of blow up solutions which is not from any given function. To deal with this difference, the following lemma will be used.

\begin{lem}\label{lem:simple} Suppose $U\geq 0$ in $\B_2^+$ satisfies
\[
\begin{split}
\begin{aligned}
\mathrm{div}(t^{1-2\sigma} \nabla U)&=0&\quad &\mbox{in } \mathcal{B}^+_2,\\
 U(x,0)&= u(x) &\quad& \mbox{on } \pa'\mathcal{B}^+_2.
 \end{aligned}
\end{split}
\]
Then there exists a positive constant $c$ depending only on $n$ and $\sigma$ such that
\[
\inf_{\B^+_1}U\ge c\inf_{B_2}u.
\] 
\end{lem}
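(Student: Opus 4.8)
The plan is to prove the estimate $\inf_{\B_1^+} U \ge c\,\inf_{B_2} u$ by reducing it, via the Harnack inequality of Proposition \ref{prop:harnack}, to an interior lower bound coming purely from the degenerate harmonicity of $U$. The point is that $U$ is a nonnegative solution of $\mathrm{div}(t^{1-2\sigma}\nabla U)=0$ in $\B_2^+$ with a nonnegative (not necessarily linear) Neumann data, so in particular $\frac{\pa U}{\pa\nu^\sigma}(x,0)=u^{(n+2\sigma)/(n-2\sigma)}(x,0)\ge 0$ on $\pa'\B_2$. Hence $U$ is a nonnegative supersolution of the homogeneous Neumann problem. First I would observe that it suffices to bound $U$ from below at a single interior point, say $U(0,1/2)$ or more conveniently on $\pa''\B_{3/2}^+$, in terms of $\inf_{B_2}u$, because once we control $U$ at one interior point the full Harnack inequality (the interior one for $\mathrm{div}(t^{1-2\sigma}\nabla\cdot)=0$, which is uniformly elliptic away from $\{t=0\}$, together with Proposition \ref{prop:harnack} near $\{t=0\}$, applied on a chain of balls covering $\overline{\B_1^+}$) upgrades it to a lower bound on all of $\B_1^+$.

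The core step is a barrier/comparison argument on the flat part. Let $m=\inf_{B_2}u\ge 0$; if $m=0$ there is nothing to prove, so assume $m>0$. I would build an explicit nonnegative subsolution $\Psi$ of
\[
\begin{cases}
\mathrm{div}(t^{1-2\sigma}\nabla\Psi)=0 & \mbox{in }\B_{3/2}^+,\\
\frac{\pa\Psi}{\pa\nu^\sigma}(x,0)\le 0 & \mbox{on }\pa'\B_{3/2},
\end{cases}
\]
with $\Psi\le m$ on $B_2$ (so that $\Psi\le u$ on $\pa'\B_{3/2}$) and $\Psi\le 0$ on $\pa''\B_{3/2}^+$, normalized so that $\Psi\ge c\,m$ on $\overline{\B_1^+}$ for a dimensional constant $c>0$. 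A natural candidate is a suitable multiple of the $\sigma$-harmonic Poisson-type function: the function whose trace is $\mathbf{1}_{B_{3/2}}$ extended $\sigma$-harmonically, which is the $t^{1-2\sigma}$-harmonic extension of an indicator and is known to be strictly positive in $\overline{\B_1^+}$ with a controlled lower bound, and which has nonpositive conormal derivative outside $B_{3/2}$ — one can instead take a rescaled model profile $\Psi(X)=m\,\phi(X)$ where $\phi$ is the fixed $t^{1-2\sigma}$-harmonic function in $\B_{3/2}^+$ with $\phi=\mathbf 1$ on $B_{3/2}$ and $\phi=0$ on $\pa''\B_{3/2}^+$ (solvable since the weight $t^{1-2\sigma}$ is an $A_2$ Muckenhoupt weight). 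Along $\pa'\B_{3/2}$, $\phi\equiv 1$ there is nothing to check, but where the conormal data of $U$ is only $\ge 0$ we need $\phi$'s conormal derivative $\le 0$, which holds because $\phi$ attains its interior maximum value $1$ on the boundary portion $B_{3/2}$, forcing $-\lim_{t\to 0^+}t^{1-2\sigma}\pa_t\phi\le 0$ there by the Hopf-type lemma for this degenerate operator. Then $\Phi:=U-\Psi$ satisfies $\mathrm{div}(t^{1-2\sigma}\nabla\Phi)=0$, $\frac{\pa\Phi}{\pa\nu^\sigma}\ge 0$ on $\pa'\B_{3/2}$, and $\Phi\ge 0$ on $\pa''\B_{3/2}^+$ (since $U\ge0$ and $\Psi\le0$ there); the maximum principle — Proposition \ref{prop:liminf} in its non-singular form, or Lemma A.1 in \cite{JLX} — gives $\Phi\ge0$, i.e. $U\ge\Psi=m\phi$ in $\B_{3/2}^+$. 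Since $\inf_{\overline{\B_1^+}}\phi=:c(n,\sigma)>0$, we conclude $U\ge c\,m$ on $\overline{\B_1^+}$, which is exactly the claim.

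The main obstacle I anticipate is producing the model function $\phi$ with all three properties simultaneously — positivity with a quantitative interior lower bound, and the sign condition $\frac{\pa\phi}{\pa\nu^\sigma}\le 0$ on $\pa'\B_{3/2}$ — in a clean self-contained way rather than appealing to a regularity/Hopf lemma that may not be literally stated in \cite{JLX}. If the abstract construction is delicate, the safe alternative is to avoid global data matching: cover $\overline{\B_1^+}$ by finitely many half-balls $\B_{r}^+(X_k)$ centered on $\pa'\B_{3/2}$ together with interior balls, use the homogeneous-Neumann Harnack inequality (Proposition \ref{prop:harnack} with $a\equiv 0$, valid since the conormal data is nonnegative, so $U$ is a supersolution of the $a\equiv0$ problem and one half of Harnack — the lower oscillation bound — still applies) to propagate a lower bound from the flat boundary inward, and anchor the chain at a boundary point where $U(x_0,0)=u(x_0)\ge m$; the weak Harnack inequality for nonnegative supersolutions of the degenerate equation then yields $\inf_{\B_1^+}U\gtrsim \fint_{B_{3/2}}u \ge$ (a constant times) a lower bound, but to get exactly $\inf_{B_2}u$ on the right one really wants the pointwise propagation, so the barrier route is cleaner. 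Either way the estimate is dimensional because all constants come from the fixed geometry of $\B_2^+,\B_{3/2}^+,\B_1^+$ and the $A_2$-weight $t^{1-2\sigma}$.
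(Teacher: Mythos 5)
Your core mechanism --- bounding $U$ from below by $m\,\phi$, where $m=\inf_{B_2}u$ and $\phi$ is the $t^{1-2\sigma}$-harmonic function whose Dirichlet data is a cutoff on the flat part and $0$ on the spherical part --- is exactly the paper's proof: there one takes a smooth $\eta$ with $\eta\equiv 1$ on $B_1$, supported in $B_2$, solves the mixed Dirichlet problem for $V$ in $\B_2^+$, and gets $U\ge (\inf_{B_2}u)\,V$ by the comparison principle, so $c=\inf_{\B_1^+}V>0$. However, you have misread the boundary condition in the lemma, and the detour this causes contains a genuine error. The hypothesis on $\pa'\B_2^+$ is the \emph{Dirichlet} condition $U(x,0)=u(x)$; no conormal (Neumann) condition is assumed, and none is needed. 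The comparison between $U$ and $\Psi=m\phi$ should therefore be run as a pure Dirichlet comparison on the \emph{entire} boundary of the half-ball, flat part included: $U=u\ge m\ge \Psi$ on the flat part and $U\ge 0=\Psi$ on the spherical part, whence $U\ge\Psi$ by the maximum principle for the $A_2$-degenerate operator. You actually set this up (``$\Psi\le u$ on $\pa'\B_{3/2}$'') but then switch to matching conormal derivatives, and there your sign is wrong: since $\phi$ attains its maximum value $1$ on the flat part, $\pa_t\phi(x,0^+)\le 0$, hence $\frac{\pa\phi}{\pa\nu^\sigma}=-\lim_{t\to 0^+}t^{1-2\sigma}\pa_t\phi\ge 0$ (strictly positive, by Hopf), not $\le 0$ as you claim. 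Consequently $\frac{\pa(U-\Psi)}{\pa\nu^\sigma}\ge 0$ does not follow, and the Neumann-type maximum principle you invoke does not close; moreover, $\frac{\pa U}{\pa\nu^\sigma}\ge 0$ is not among the lemma's hypotheses in the first place (it holds for the $U_j$ in the application, but the lemma is stated without it).

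The fix is simply to delete the conormal bookkeeping (and the Harnack-chain alternative, which also leans on the unstated Neumann condition): keep only the Dirichlet comparison, which needs nothing beyond the weighted maximum principle and the strict positivity $\inf_{\B_1^+}\phi>0$ of the fixed model function. With that excision your argument coincides with the paper's two-line proof.
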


\begin{proof} Let $\eta$ be a smooth cut-off function supported in $B_2$ so that $\eta\equiv 1$ in $B_1$. Let $V$ be the solution of
\[
\begin{split}
\mathrm{div}(t^{1-2\sigma} \nabla V)&=0\quad \mbox{in } \mathcal{B}^+_2,\\
 V(x,0)&=\eta(x) \quad \mbox{on } \pa'\mathcal{B}^+_2,\\
 V(x,0)&=0 \quad \mbox{on } \pa''\mathcal{B}^+_2.
\end{split}
\]
Let $\tilde V=(\inf_{B_2} u)V$. By the comparison principle, we have $ U\ge \tilde V$.
It follows that 
\[
 \inf_{\mathcal{B}^+_1} U\ge \inf_{\mathcal{B}^+_1} \tilde V=c\inf_{B_2} u,
\]
where $c=\inf_{\B^+_1}V>0$.
\end{proof}

\begin{proof}[Proof of Theorem \ref{thm:c}] 
We only need to prove it for $R=1$ by making a transformation $U(X)\mapsto R^{\frac{n-2\sigma}{2}}U(RX)$. Suppose the contrary that there exists a sequence of solutions $U_j$ of \eqref{eq:nonsingu} such that
\[
u_j(x_j)\min_{\overline B_2} u_j>j\quad \mbox{as } j\to\infty,
\]
where $u_j(x_j)=\max_{\overline B_1}u_j$.

Consider
\[
v_j(x):=\left(1-|x-x_j|\right)^{\frac{n-2\sigma}{2}} u_j(x),\quad |x-x_j|\leq 1.
\]
Let $|\bar x_j-x_j|<1$ satisfy
\[
v_j(\bar x_j)=\max_{|x-x_j|\leq 1}v_j(x),
\]
and let
\[
2\mu_j:=1-|\bar x_j-x_j|.
\]
Then
\[
0<2\mu_j\leq 1\quad\mbox{and}\quad 1-|x-x_j|\ge\mu_j \quad \forall ~ |x-\bar x_j|\leq \mu_j.
\]
By the definition of $v_j$, we have
\be\label{eq:cl3-5}
(2\mu_j)^{\frac{n-2\sigma}{2}}u_j(\bar x_j)=v_j(\bar x)\ge v_j(x)\ge (\mu_j)^{\frac{n-2\sigma}{2}}u_j(x)\quad \forall ~ |x-\bar x_j|\leq \mu_j.
\ee
Thus, we have
\[
2^{\frac{n-2\sigma}{2}}u_j(\bar x_j)\ge u_j(x)\quad \forall ~ |x-\bar x_j|\leq \mu_j.
\]
We also have
\be\label{eq:cl4-5}
(2\mu_j)^{\frac{n-2\sigma}{2}}u_j(\bar x_j)=v_j(\bar x_j)\ge v_j(x_j)= u_j(x_j)\to \infty.
\ee
Now, consider
\[
W_j(y,t)=\frac{1}{u(\bar x_j)}U_j\left(\bar x_j+\frac{y}{u(\bar x_j)^{\frac{2}{n-2\sigma}}}, \frac{t}{u(\bar x_j)^{\frac{2}{n-2\sigma}}}\right ), \quad (y,t)\in \B_{\Gamma_j}^+,
\]
where
\[
\Gamma_j=u_j(\bar x_j)^\frac{2}{n-2\sigma},
\]
and let $w_j(x)=W_j(x,0)$. Then $w_j(0)=1$ and $W_j$ satisfies that 
\be \label{eq:ext1-5}
\begin{cases}
\begin{aligned}
\mathrm{div}(t^{1-2\sigma} \nabla W_j)&=0 & \quad& \mbox{in }\B_{\Gamma_j}^+,\\
\frac{\pa W_j}{\pa \nu^\sigma}& =W_j^{\frac{n+2\sigma}{n-2\sigma}} &\quad& \mbox{on }\pa' \B_{\Gamma_j}^+.
\end{aligned}
\end{cases}
\ee
Moreover, it follows from \eqref{eq:cl3-5} and \eqref{eq:cl4-5} that 
\[
w_j(y)\leq 2^{\frac{n-2\sigma}{2}} \quad\mbox{in } B_{R_j},
\] 
where $R_j:=\mu_j u(\bar x_j)^{\frac{2}{n-2\sigma}}\to \infty$ as $j\to \infty$. Then as before, after passing to a subsequence, we have, for some nonnegative function
$W\in W^{1,2}_{loc}(t^{1-2\sigma},\overline{\mathbb{R}^{n+1}})\cap C^{\al}_{loc}(\overline{\mathbb{R}^{n+1}})$
\[
\begin{cases}
W_j&\rightharpoonup W\quad\mbox{weakly in }W^{1,2}_{loc}(t^{1-2\sigma},\R^{n+1}_+),\\
W_j&\rightarrow W\quad\mbox{in }C^{\al/2}_{loc}(\overline{\R^{n+1}_+}),\\
w_j&\rightarrow w\quad\mbox{in }C^2_{loc}(\R^n).
\end{cases}
\]
Moreover, $W$ satisfies \eqref{eq:ext2}, and $w$ is as in \eqref{eq:cl5} up to some multiple, translation and scaling.

On the other hand, we are going to show that
\be\label{eq:aim1-5}
w_{\lda, x}(y)\leq w(y)\quad \forall~\lda>0, x\in \R^n, ~ |y-x|\ge\lda.
\ee
Again, by an elementary calculus lemma in \cite{LZhang}, \eqref{eq:aim1-5} implies that $w\equiv constant$, which contradicts to \eqref{eq:cl5}.

We have, with the help of Lemma \ref{lem:simple}, 
\[
\min_{\pa'' \B^+_{\Gamma_j/2}} W_j= \inf_{\B^+_{\Gamma_j/2}} W_j\ge c \inf_{B_{\Gamma_j}} W_j(\cdot,0)\ge \frac{c\min_{\overline B_2} u_j}{u_j(\bar x_j)}=\frac{c u_j(x_j)\min_{\overline B_2} u_j}{u_j(x_j)u_j(\bar x_j)}\ge c\frac{j}{u_j(\bar x_j)^2}.
\]
Thus, for any fixed $\lda_2$ and $X_0$, we have, for $j$ large,
\be\label{eq:key to move}
W_j\geq \left(\frac{\lda_2}{|\xi-X_0|}\right)^{n-2\sigma}\inf\limits_{\partial'' \B_{\lda_2}(X_0)} W_j \quad \mbox{on }\pa'' \B^+_{\Gamma_j/2}.
\ee
Once we have \eqref{eq:key to move} for $j$ large, we can show \eqref{eq:aim1-5} by the same arguments as before.
\end{proof}

\small

\bigskip

\noindent L. Caffarelli

\noindent Department of Mathematics, University of Texas at Austin\\
1 University Station, C1200, Austin, Texas 78712, USA\\[1mm]
Email: \textsf{caffarel@math.utexas.edu}

\medskip

\noindent T. Jin

\noindent Department of Mathematics, The University of Chicago\\
5734 S. University Avenue, Chicago, IL 60637, USA\\[1mm]
Email: \textsf{tj@math.uchicago.edu}

\medskip

\noindent Y. Sire

\noindent Universit\'e Aix-Marseille and LATP\\
 9, rue F. Joliot Curie, 13453 Marseille Cedex 13, France \\[1mm]
Email: \textsf{sire@cmi.univ-mrs.fr}

\medskip

\noindent J. Xiong

\noindent Beijing International Center for Mathematical Research, Peking University\\
Beijing 100871, China\\[1mm]
Email: \textsf{jxiong@math.pku.edu.cn}


\begin{thebibliography}{99}

\bibitem{BN} Berestycki, H.; Nirenberg, L.:
           \emph{On the method of moving planes and the sliding method}.
             Bull. Braz. Math. Soc. \textbf{22} (1991), 1--37.
             
\bibitem{CS} Cabr\'e, X.; Sire, Y.:
            \emph{Nonlinear equations for fractional Laplacians I: Regularity, maximum principles, and Hamiltonian estimates}. 
            To appear in Ann. Inst. H. Poincar\'e Anal. Non Lin\'eaire, \url{arXiv:1012.0867}.


\bibitem{CGS}  Caffarelli, L.; Gidas, B.; Spruck, J.:
             \emph{ Asymptotic symmetry and local behavior of semilinear elliptic equations with critical Sobolev growth}.
             Comm. Pure Appl. Math. \textbf{42} (1989), 271--297.

\bibitem{CaS} Caffarelli, L.; Silvestre, L.:
              \emph{An extension problem related to the fractional Laplacian}.
              Comm. Partial Differential Equations \textbf{32} (2007), 1245--1260.

\bibitem{CaL} Cao, D.; Li, Y.Y.:
              \emph{Results on positive solutions of elliptic equations with a critical Hardy-Sobolev operator}.
              Methods Appl. Anal. \textbf{15} (2008), 81--95.

\bibitem{CG} Chang, S.-Y.; Gonz\'alez, M.:
            \emph{Fractional Laplacian in conformal geometry}.
            Adv. Math. \textbf{226} (2011), 1410--1432 .
            
\bibitem{ChLin} Chen, C.C.;  Lin, C.S. :
             \emph{Local behavior of singular positive solutions of semilinear elliptic equations with Sobolev exponent}.
              Duke Math. J. \textbf{78} (1995), no. 2, 315--334.

\bibitem{CLO} Chen, W.; Li, C.; Ou, B.:
             \emph{Classification of solutions for an integral equation}.
             Comm. Pure Appl. Math.  \textbf{59} (2006), 330--343 .
             
\bibitem{GMS} Gonz\'alez, M.; Mazzeo, R.; Sire, Y.:
             \emph{Singular solutions of fractional order conformal Laplacians}.
             J. Geom. Anal. \textbf{22} (2012), 845--863.

\bibitem{GQ} Gonz\'alez, M.; Qing, J.:
              \emph{Fractional conformal Laplacians and fractional Yamabe problems}.
              arXiv:1012.0579.

\bibitem{GZ} Graham, C.R.; Zworski, M.:
             \emph{Scattering matrix in conformal geometry}.
              Invent. Math. \textbf{152} (2003), 89--118.
                            
\bibitem{HLT} Han, Z.-C.; Li, Y.Y.; Teixeira, E. V.:
              \emph{Asymptotic behavior of solutions to the $\sigma_k$-Yamabe equation near isolated singularities}.
              Invent. Math. \textbf{182} (2010), no. 3, 635--684.

\bibitem{JLX} Jin, T.; Li,Y.Y.; Xiong, J.:
            \emph{On a fractional Nirenberg problem, part I: blow up analysis and compactness of solutions}.
            To appear in J. Eur. Math. Soc. (JEMS), \href{http://arxiv.org/abs/1111.1332}{arXiv:1111.1332}.
            
\bibitem{JLX2} Jin, T.; Li,Y.Y.; Xiong, J.:
            \emph{On a fractional Nirenberg problem, part II: existence of solutions}.
            Preprint.

\bibitem{JX1}   Jin, T.; Xiong, J.:
              \emph{A fractional Yamabe flow and some applications}. 
              J. Reine Angew. Math. Published Online: \url{DOI: 10.1515/crelle-2012-0110}.


\bibitem{KMPS} Korevaar, N.; Mazzeo, R.; Pacard, F.; Schoen, R.:
             \emph{Refined asymptotics for constant scalar curvature metrics with isolated singularities}.
             Invent. Math. 135 (1999), no. 2, 233--272.

\bibitem{Li04} Li,Y.Y.:
              \emph{Remark on some conformally invariant integral equations: the method of moving spheres}.
              J. Eur. Math. Soc. (JEMS) \textbf{6} (2004), 153--180.
              
\bibitem{Li06} Li,Y.Y.:
              \emph{Conformally invariant fully nonlinear elliptic equations and isolated singularities}.
              J. Funct. Anal. \textbf{233} (2006), 380--425.

\bibitem{LL} Li, A.; Li, Y.Y.:
              \emph{On some conformally invariant fully nonlinear equations. II. Liouville, Harnack and Yamabe}. 
              Acta Math. \textbf{195} (2005), 117--154. 

\bibitem{LZhang} Li,Y.Y.; Zhang, L.:
             \emph{Liouville-type theorems and Harnack-type inequalities for semilinear elliptic equations}.
             J. Anal. Math. \textbf{90}  (2003), 27--87.

\bibitem{LZhu} Li,Y.Y.; Zhu, M.:
            \emph{Uniqueness theorems through the method of moving spheres}.
            Duke Math. J. \textbf{80}  (1995), 383--418.

\bibitem{Lie83} Lieb, E.H.:
               \emph{Sharp constants in the Hardy-Littlewood-Sobolev and related inequalities}.
               Ann. of Math. (2) \textbf{118} (1983), 349--374.

\bibitem{Schoen} Schoen, R.:
              Courses at Stanford University, 1988, and New York University,1989.
               
\bibitem{TX} Tan, J., Xiong, J.:
             \emph{A Harnack inequality for fractional Laplace equations with lower order terms}.
              Discrete Contin. Dyn. Syst. \textbf{31} (2011), 975--983.



\end{thebibliography}
\end{document}